\newcommand{\I}{\mathrm{i}}
\newcommand{\D}{\mathrm{d}}
\newcommand{\U}{\mathbf{u}}
\newcommand{\V}{\mathbf{v}}
\newcommand{\x}{\mathbf{x}}
\newcommand{\im}{\mathrm{i}}
\definecolor{grey}{rgb}{0.53,0.54,0.52}
\definecolor{darkred}{rgb}{0.6,0,0}
\definecolor{lightgrey}{gray}{0.85}
\begin{document}

\title{Stable difference methods for block-oriented adaptive grids
}


\author{Anna Nissen \and Katharina Kormann\and Magnus Grandin  \and Kristoffer Virta}

\authorrunning{Nissen, Kormann, Grandin, Virta} 

\institute{A. Nissen \at
              Dept. of Mathematics, University of Bergen, Bergen, Norway \\
              \email{anna.nissen@math.uib.no}           
           \and
           K. Kormann \at
              Zentrum Mathematik, Technische Universit{\"a}t M{\"u}nchen, Munich, Germany \\
             \email{katharina.kormann@tum.de}           
          \and
           M. Grandin \and K. Virta \at 
              Dept. of Information Technology, Uppsala University, Uppsala, Sweden \\
             \email{\{magnus.grandin, kristoffer.virta\}@it.uu.se}           
}

\date{}

\maketitle

\begin{abstract}
In this paper, we present a block-oriented scheme for adaptive mesh refinement based on summation-by-parts (SBP) finite difference methods and simultaneous-approximation-term (SAT) interface treatment. Since the order of accuracy at SBP-SAT grid interfaces is lower compared to that of the interior stencils, we strive at using the interior stencils across block-boundaries whenever possible. We devise a stable treatment of SBP-FD junction points, i.e. points where interfaces with different boundary treatment meet. This leads to stable discretizations for more flexible grid configurations within the SBP-SAT framework, with a reduced number of SBP-SAT interfaces. Both first and second derivatives are considered in the analysis. Even though the stencil order is locally reduced close to numerical interfaces and corner points,  numerical simulations show that the locally reduced accuracy does not severely reduce the accuracy of the time propagated numerical solution. Moreover, we explain how to organize the grid and how to automatically adapt the mesh, aiming at problems of many variables. Examples of adaptive grids are demonstrated for the simulation of the time-dependent Schr{\"o}dinger equation and for the advection equation.


%

\keywords{summation-by-parts \and simultaneous-approximating-term \and block-structured grid \and adaptive mesh refinement \and time-dependedent Schr{\"o}dinger equation \and advection equation}
\end{abstract}

%

\section{Introduction}



Accurate numerical simulation of time-dependent phenomena in many spatial dimensions is a challenge in a wide range of application areas, for example quantum dynamics \cite{Meyer90} and systems biology \cite{Ferm10}. Other challenging areas are wave propagation problems with drastically varying physical features in different spatial regions, governed by e.g. the elastic wave equation~\cite{Appelo09}. Computational problems in higher dimensions and/or widely varying scales are demanding since the number of grid points required to represent a solution with high enough resolution often becomes prohibitively large. As a consequence, high-order and adaptive schemes are an active area of research in these application fields. The focus of our paper is on wave propagation problem described either by first derivatives or complex second derivatives. 

When structured grids can be used, finite difference methods allow for efficient implementation of high-order methods. Summation-by-parts (SBP) operators are finite difference operators with special boundary closure such that the discrete operators mimic properties of the continuous operators. An attractive feature of the SBP operators is that in combination with the simultaneous-approximation term (SAT) boundary treatment~\cite{Carpenter94}, this discretization often leads to time-stability in single- as well as in multiblock configurations~\cite{Mattsson10}. The combined scheme is referred to as SBP-SAT. Since the original development by Kreiss and Scherer \cite{Kreiss74} for first order derivatives, several contributions have been made to further develop SBP operators (see eg. \cite{Strand94,Mattsson2004}). The SBP-SAT framework has also been successfully applied to a large range of physical problems, see for example \cite{Svard08, Lindstrom10, Kozdon13}, resulting in robust discretizations.



 
Originally, the SAT treatment has been applied to physical boundaries and to numerical interfaces for which the collocation points in neighboring blocks coincide. More recently, similar boundary treatments have been used to combine high-order finite difference operators with adaptive mesh refinement \cite{Mattsson10, Kramer09}. In this setting, the computational domain is decomposed into multiple blocks with different refinement level. On each block, the SBP technique is used for discretization.
On boundaries of patches with different refinement, so called nonconforming interfaces, one essentially has two choices to implement the coupling between the blocks. In a continuous formulation the stencil across block-boundaries is modified. In a discontinuous formulation each patch is discretized separately and penalty terms are added that couple the discretizations across the interfaces. In each case, the interface treatment gets increasingly complicated with the order of accuracy of the method.

Mattsson and Carpenter \cite{Mattsson10} have studied the case of a discontinuous formulation and derived interpolation operators and penalty terms also for high-order methods. However, they only treat interfaces between two blocks of different refinement levels. When studying a fully adaptive mesh we will also encounter corner points, i.e. points where two or more interfaces between blocks of different refinement level intersect. Treating the corner points in a stable way that does not reduce the overall accuracy of the scheme is a challenging task. Kramer and coworkers \cite{Kramer09} have studied a continuous formulation based on SBP. They showed how to, in a stable manner, handle edges as well as corners where patches of different refinement level meet. The stability comes at the cost of a lower accuracy compared to the order of accuracy of the interior stencil. 

In this paper, we explain how to handle corner points in a stable way while keeping extra refinements of the mesh to a minimum. Contrary to the approach in \cite{Kramer09}, our study is based on a discontinuous formulation where the SBP-SAT interfaces are combined with the interpolation operators derived in \cite{Mattsson10}. The key idea of our approach is to allow for junctions where different types of block-boundary treatments are allowed. We design new, so called SBP-FD junction operators, that allow for a more flexible grid treatment within the SBP-SAT framework. Similarly as in \cite{Kramer09}, we sacrify some accuracy in order to obtain a stable discretization. 

Berger and Oliger \cite{BergerOliger84} developed an approach for structured adaptive mesh refinement (SAMR), for multiple component grids in a finite difference setting. In their original method, the refined regions can be arbitrarily placed and oriented with respect to underlying grid patches. Berger and Colella \cite{BergerColella89} modified the algorithm such that patches are restricted to be aligned with one another, which significantly simplifies the mesh organization. Although the Berger-Colella approach is an improvement of the original Berger-Oliger method perfomance-wise, it still suffers from overhead in the mesh adaptation step~\cite{Rantakokko09}. Moreover, possibly overlapping patches of arbitrary size and shape result in complex data dependencies in the grid hierarchy, which complicates matters related to parallelization and load balancing on large compute clusters~\cite{MacNeice00,Rantakokko09}. 

Block-oriented SAMR (also referred to as block-based or block-wise SAMR in the literature) has been proposed as an alternative to the aforementioned methods of mesh refinement. In this class of methods, the computational domain is decomposed into a hierarchy of non-overlapping grid blocks, where refinement is undertaken with respect to entire grid blocks only. In comparison with the method of Berger-Colella, the overhead associated with grid management is reduced and load balancing becomes a simpler and more straightforward task \cite{Rantakokko09}. There are two strategies available for block refinement. One strategy is to introduce more grid points in the blocks where refinement is needed, keeping the number of blocks constant. The other strategy is to keep the block size in terms of grid points constant and increase the number of blocks in regions where refinement is needed. In effect, a block that needs refinement is subdivided and replaced by a number of sub-blocks covering the exact same sub-region as the original block, resulting in a finer resolution in that region. We build our implementation and numerical techniques upon the latter scheme. Among the examples of scientific software packages that implement block-oriented AMR in this fashion are PARAMESH~\cite{MacNeice00} and Racoon~\cite{Dreher05}. Both of these packages implement a similar refinement scheme and grid organization, where blocks are always subdivided isotropically (i.e., uniformly in all dimensions). In our approach, we allow for \textit{anisotropic} refinement and let blocks be refined in one, a few or all dimensions as required to fulfill the refinement criteria. We expect the anisotropic grid refinement strategy to become increasingly important as we aim at tackling higher-dimensional problems, since this will keep the number of created blocks to a minimum and thereby reduce the overall memory requirements.


In this paper, we consider the discretization of partial differential equations (PDE) with first and second order derivatives. Stability analysis and numerical convergence studies are provided for the advection equation and for the free particle Schr\"{o}dinger equation to illustrate the applicability of our approach to different classes of problems. As examples with adaptive refinement, we consider the quantum harmonic oscillator as well as the advection equation. 

The article is organized as follows. Section \ref{GridOrganisation} deals with the hierarchical grid structure. Section \ref{sec:second} introduces the discretization for a parabolic problem with second derivatives in space. The stable treatment of SBP-FD junctions where interfaces of different type meet is devised and the accuracy is discussed. A discretization of a hyperbolic equation with first derivatives on block-oriented grids is the subject of Section~\ref{sec:first}. An extensive numerical convergence study for different types of corner points occurring in a block-adaptive grid is provided in Section~\ref{sec:num_convergence}. Finally, we explain how to automatically adapt the mesh in Section~\ref{sec:adaptivity}. Numerical examples are presented for the example of the time-dependent Schr{\"o}dinger equation and for the advection equation. Section \ref{section7} gives concluding remarks and an outlook on future work.

\section{Grid organization}

\label{GridOrganisation}

We implement a conservative block-oriented refinement scheme that strives to minimize the fan-out of the mesh refinement and avoid wasting memory on unnecessarily fine grid blocks. Meshes are structured such that all blocks represent an equal number of grid points but due to varying refinement	they correspond to logical $d$-dimensional hyperrectangles ($d$-orthotopes) of different size. For simplicity of implementation, we restrict the refinement such that two adjoining blocks can differ in refinement ratio by at most a factor $2$ in each direction. 

Grid refinement is carried out block-by-block in an anisotropic manner (i.e., blocks do not have to be refined uniformly in all dimensions). This gives us the freedom to refine a block as needed in the dimensions where refinement is required while leaving the block coarser in the other dimensions. We expect this strategy to generate fewer grid blocks than isotropic refinement and therefore to be more memory efficient. In higher spatial dimensions, this gain will become increasingly significant, in particular if the solution is elongated or has oscillations in some dimensions only. 

Upon refinement of a block, the block to be refined is halved in the desired dimension, generating two new blocks that are filled with intermediate data values such that the resulting spatial resolution in that dimension is twice that of the original block. In $d$ dimensions, this can be generalized to a hyperplane cutting through a $d$-orthotope, splitting it in two parts. A block can be as elongated as is needed, in principle without any restrictions on the ratio between the lengths of its edges. Depending on the properties of the numerical discretization method, however, such restrictions might arise for stability reasons. The methods we use in this paper do not have any such formal restrictions, but the required time step size will largely be affected by the smallest spatial step size. 



\section{Second derivatives}\label{sec:second}

We consider the Schr\"{o}dinger equation
\begin{equation}
  U_t = \im \Delta U - \im V U, \label{TDSE}
\end{equation}
with initial and boundary conditions. Here, $V$ denotes a spatially dependent potential operator. Since $V$ has no impact on the stability analysis (cf. \cite{Nissen11}), we consider the free particle case, for which $V=0$. We discretize the PDE based on the method of lines. As such, we first discretize in space using finite difference methods. The resulting system of ordinary differential equations is then propagated in time using an approach based on exponential integrators. The spatial and temporal discretization methods are described below. At the end of Sec.~\ref{sec:Accuracy}, we comment on the case of real second derivatives modeling diffusion.

\subsection{Spatial discretization}
The spatial discretization is carried out using SBP operators, a finite difference discretization with central difference stencils in the interior and one-sided stencils close to the block boundaries. Note that the boundary stencils are of lower order than the order of the interior scheme. In the multiblock structure proposed in Sec.~\ref{GridOrganisation}, various types of block boundaries need to be treated. Across boundaries between blocks of precisely the same refinement in all dimensions, central finite differences (c-FD) can be used. This is the simplest and most accurate way of discretizing over block boundaries. However, the arrangement of blocks in the vicinity of such an interface might require SAT couplings to be enforced across the boundary in order to preserve stability (cf. Sec.~\ref{sec:junction}). Grid blocks for which the refinement level along the interface is identical but the refinement level in the dimension perpendicular to the interface differs are coupled using pure SAT terms. If the refinement level along the interface differs, i.e. for nonconforming interfaces, the SAT terms are combined with interpolation and projection operators.

The coupling between nonconforming grid blocks is enforced by a combination of SAT terms and interpolation and projection operators constructed for SBP operators (cf.~\cite{Mattsson10}). In order to preserve the stability of the semi-discretization, proper coupling terms can be determined using the energy method. For a more precise description of the semi-discretization for the time-dependent Schr{\"o}dinger equation and the detailed form of the coupling terms leading to a stable semi-discretization on nonconforming grids, we refer to~\cite{Nissen11}. 

\begin{figure}[!t]
  \centering
  \subfloat[][]{
    \begin{picture}(80,80)
     \put(0,0){\line(0,1){80}}
      \color{red}\put(39,0){\line(0,1){80}}
      \color{red}\put(41,0){\line(0,1){80}}
     \color{blue} \put(60,0){\line(0,1){39}}
      \color{black}\put(80,0){\line(0,1){80}}
     \color{blue} \put(41,20){\line(1,0){39}}
      \color{black}\put(0,0){\line(1,0){80}}
      \color{red}\put(0,39){\line(1,0){80}}
      \color{red}\put(0,41){\line(1,0){80}}
      \color{black}\put(0,80){\line(1,0){80}}
       \color{black}\put(40,40){\circle*{6}}
       \color{black} \put(5,5){3}
      \put(5,45){1}
      \put(45,5){6}
      \put(65,5){7}
      \put(45,25){4}
      \put(65,25){5}
      \put(45,45){2}
    \end{picture}
     \label{grid2}
  }
    \hspace{20pt}
  \subfloat[][]{
    \begin{picture}(80,80)
     \put(0,0){\line(0,1){80}}
      \color{red}\put(39,0){\line(0,1){40}}
      \color{red}\put(41,0){\line(0,1){40}}
      \color{blue} \put(40,40){\line(0,1){40}} 
      \color{black}\put(80,0){\line(0,1){80}}
      \color{black}\put(0,0){\line(1,0){80}}
      \color{red}\put(0,39){\line(1,0){80}}
      \color{red}\put(0,41){\line(1,0){80}}
      \color{black}\put(0,80){\line(1,0){80}}
       \color{black}\put(40,40){\circle*{6}}
       \color{black} \put(5,5){3}
      \put(5,45){1}
      \put(45,5){4}
      \put(45,45){2}
    \end{picture}
     \label{grid3}
     }
         \hspace{20pt}
  \subfloat[][]{
    \begin{picture}(80,80)
     \put(0,0){\line(0,1){80}}
      \color{red}\put(39,0){\line(0,1){40}}
      \color{red}\put(41,0){\line(0,1){40}}
      \color{blue}\put(40,40){\line(0,1){40}}
      \color{blue} \put(60,0){\line(0,1){39}}
      \color{blue} \put(20,0){\line(0,1){39}}
      \color{black}\put(80,0){\line(0,1){80}}
      \color{blue} \put(41,20){\line(1,0){39}}
      \color{blue} \put(0,20){\line(1,0){39}}
      \color{black}\put(0,0){\line(1,0){80}}
      \color{red}\put(0,39){\line(1,0){80}}
      \color{red}\put(0,41){\line(1,0){80}}
      \color{black}\put(0,80){\line(1,0){80}}
       \color{black}\put(40,40){\circle*{6}}
       \color{black} \put(5,5){5}
      \put(5,45){1}
       \put(25,5){6}
      \put(5,25){3}
      \put(25,25){4}
      \put(45,5){9}
      \put(65,5){10}
      \put(45,25){7}
      \put(65,25){8}
      \put(45,45){2}
    \end{picture}
     \label{grid4}
  }
  \caption{  Special grid structures that have to be studied to maintain symmetry. The red lines denote that the nonconforming block interfaces are treated with SBP interpolation in combination with SAT coupling terms, and the blue dashed line that c-FD discretization is used over the block interface.}
   \label{grids}
\end{figure}
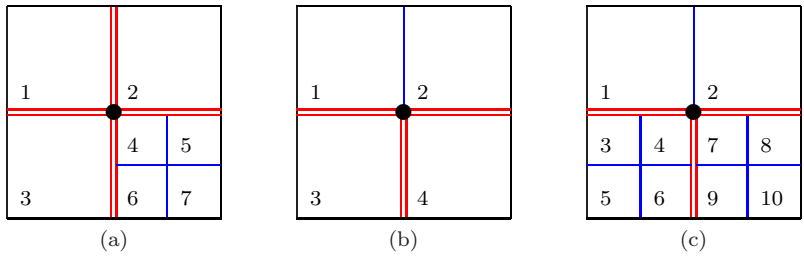


As mentioned above, interfaces between blocks of identical refinement are treated either using one-sided stencils and penalty terms (SBP-SAT interface), or using c-FD stencils (FD interface). The SBP-SAT framework implies that there are two separate solutions along an interface, one in each block, that are allowed to differ. For FD interfaces on the other hand, only one unique solution along the interface is allowed. In our implementation, in order to swiftly be able to alter the representation of an interface, we always keep separate copies of the solution on either side of the interface. By enforcing the values to be equal for both solutions along every FD interface, this will not affect the numerical solution in any way.

Since the error constant is smaller for c-FD stencils than for SBP-SAT stencils (cf. the experiments in Sec.~\ref{sec:ho}--\ref{sec:mesh_design}), it is reasonable to use FD interfaces whenever possible. The immediate strategy that comes to mind would be to use FD interfaces for all boundaries between equally refined blocks and SBP-SAT interfaces elsewhere. However, a corner point as the black point in Fig.~\ref{grid2} would lead to an asymmetric discretization if treated this way. Adding SBP-SAT interfaces between blocks 1 \& 2 and 1 \& 3 solves this problem but if we further refine block 3, ending up with the situation in Fig.~\ref{grid4}, it is no longer desirable to have an SBP-SAT interface between blocks 1 \& 2. In order to enforce an FD interface along this block boundary in a stable manner, we need to take special care of the grid points around the black point in Fig.~\ref{grid4} (and similarly in Fig.~\ref{grid3}). We refer to this type of intersection as SBP-FD junctions, and in the following subsection we devise a stable treatment of them.


\subsection{Stable treatment of SBP-FD junctions}\label{sec:junction}

We consider the semi-discretization of Eq.~\eqref{TDSE} for the case of Fig.~\ref{grid3} and denote the semi-discrete solution in block 1 by $u$, in block 3 by $v$, and the joint solution in blocks 2 and 4 by $w$. In order to obtain a stable semi-discretization we use the energy method. We define the scalar product and the norm of vectors $f$, $g$ as
\begin{align}
(f,g)_{P} = f^* P g, \quad \| f\|_{P}^2 = f^* P f,
\end{align}
where $^*$ denotes the complex conjugate. Here, $P$ is a diagonal, positive definite operator. The Kronecker product is denoted by $\otimes$ and for block $\star$ we have $P_{\star} = P_{y,\star} \otimes P_{x,\star}$. Further, we define $e_{0,\star} = (1,0,\cdots,0)^T$ and $e_{N,\star} = (0, \cdots, 0, 1)^T$ of dimension $n_{y,\star} \times 1$, where $n_{y,\star}$ is the number of points in the $y$-direction for block $\star$. We introduce $D2_{x/y,\star}$'s that are approximations of second derivatives defined by
\begin{equation}\begin{aligned}
D2_{x,w} &= - P_{x,w}^{-1} A_{x,w}, \hspace{10pt} D2_{x,u} = - P_{x,u}^{-1} A_{x,u}, \hspace{10pt} D2_{x,v} = - P_{x,v}^{-1} A_{x,v}, \notag \\ 
D2_{y,w} &= P_{y,w}^{-1} \left( -A_{y,w}  - e_{0,w} e_{0,w}^T S_w \right), \notag \\
D2_{y,u} &= P_{y,u}^{-1} \left( -A_{y,u}  + e_{N,u} e_{N,u}^T S_u \right), \notag \\
D2_{y,v} &= P_{y,v}^{-1} \left( -A_{y,v}  + e_{N,v} e_{N,v}^T S_v \right), \notag
\end{aligned}\end{equation}
where $A_{\star} = A_{\star}^T$, and $A_{\star}$ is positive definite. The first (last) row of the matrix corresponding to the operator $S_w$ ($S_u/S_v$) approximates a first derivative. We use the operators approximating second derivatives developed in \cite{Mattsson2004}.  Since we only consider the coupling at the junction, we have excluded the parts of the operators associated with other boundaries and interfaces. Note that we also need to impose penalty terms between block 3 \& 4 due to the SBP-SAT interface. However, we disregard this term in our analysis since it does not interact with the SBP-FD junction. The approximation of the Laplacian is given as the Kronecker product $D2_{\star} = D2_{x,\star} \otimes I_{y,\star} + I_{x,\star} \otimes D2_{y,\star}$.

Following the theory in \cite{Nissen11}, we make the following ansatz for our spatial discretization of Eq. \eqref{TDSE},
\begin{align}
w_t =&\; \im \left\{ D2_{x,w} \otimes I_y +  I_x \otimes D2_{y,w}\right\} w \notag\\
 &-\gamma_w I_x \otimes \left(P_{y,w}^{-1}S_{w}^T \right) \left\{\left( I_{w}^{w} \otimes \left( e_{0,w} e_{0,w}^T\right)\right)w-\vphantom{\left(\begin{array}{c} e_{N,u} \\ e_{N,v} \end{array} \right)^T} \right. \notag \\
 & \mspace{180mu} \left. \left(I_{uv}^w \otimes \left(e_{0,w}\left(\begin{array}{c} e_{N,u} \\ e_{N,v} \end{array} \right)^T\right)\right) \left(\begin{array}{c} u\\v\end{array} \right) \right\} \label{eq_w} \\
&- \tau_w I_x \otimes P_{y,w}^{-1} \left\{ \left( I_w^w\otimes \left( e_{0,w} e_{0,w}^T S_w\right) \right) w-\vphantom{\left(\begin{array}{c} e_{N,u} \\ e_{N,v} \end{array} \right)^T} \right. \notag \\
&\mspace{130mu} \; \left. \left( I_{uv}^w \otimes\left( e_{0,w} \left(\begin{array}{c} e_{N,u} \\ e_{N,v} \end{array} \right)^T  \left(\begin{array}{cc} S_{u} & 0 \\ 0 & S_v \end{array}\right) \right)\right) \left(\begin{array}{c} u\\v\end{array} \right) \right\}, \notag \\
u_t =&\; \im \left\{ D2_{x,u} \otimes I_y +  I_x \otimes D2_{y,u}\right\} u \notag\\& - 
\gamma_{uv} I_x\otimes \left(P_{y,u}^{-1} S_{u}^T\right)\left\{\left(I_u^u \otimes \left( e_{N,u} e_{N,u}^T\right)\right) u - \left(I_w^u \otimes \left( e_{N,u} e_{0,w}^T\right)\right) w \right\}   \label{eq_u} \\
&- \tau_{uv}I_x \otimes P_{y,u}^{-1}  \left\{\left(I_u^u \otimes \left(e_{N,u} e_{N,u}^T  S_u\right)\right) u - \left(I _w^u\otimes \left(e_{N,u} e_{0,w}^T  S_w\right)\right) w \right\}, \notag \\
v_t =&\; \im \left\{ D2_{x,v} \otimes I_y +  I_x \otimes D2_{y,v}\right\} v \notag 
\\&- \gamma_{uv}I_x \otimes\left( P_{y,v}^{-1} S_{v}^T\right) \left\{ \left(I_v^v \otimes \left(e_{N,v} e_{N,v}^T \right)\right) v - \left(I_w^v \otimes \left( e_{N,v} e_{0,w}^T \right)\right) w \right\}\label{eq_v}  \\ 
&- \tau_{uv}I_x \otimes P_{y,v}^{-1}  \left\{\left(I_v^v \otimes \left( e_{N,v} e_{N,v}^T  S_v\right)\right) v -\left(I_w^v \otimes \left( e_{N,v} e_{0,w}^T S_w\right)\right) w \right\}, \notag \\[10pt]
&t  \geq 0, w(0) = w^{(0)}, u(0) = u^{(0)}, v(0) = v^{(0)}. \notag 
\end{align} 
By considering an energy estimate, we can derive penalty parameters \\ $\gamma_w,\tau_w,\gamma_{uv},\tau_{uv} \in \I \mathbb{R}$ and design interpolation operators $I_w^w,I_u^u,I_v^v,I_{uv}^w, I_w^u,I_w^v$ that ensure a stable and accurate discretization. In order to obtain a bound we will need the following relation,
\begin{equation}\label{eq:interpolation_relation}\begin{aligned}
I_{uv}^w = \left( \begin{array}{cc} P_{x,w}^{-1}(I_w^u)^T P_{x,u} &  \hspace{5pt}  P_{x,w}^{-1}(I_w^v)^T P_{x,v} \end{array} \right).
\end{aligned}\end{equation}
Using equation \eqref{eq:interpolation_relation} we arrive at the following theorem for the situation in Fig. \ref{grid3}. 
\begin{theorem}
\label{th:sbp-fd-jct-tdse-eq-ref}
Consider the SBP-FD junction discretization for the Schr\"{o}dinger equation \eqref{eq_w}-\eqref{eq_v}, with operators $I_w^w,I_u^u,I_v^v$ given by identity matrices and interpolation operators $I_{uv}^w, I_w^u,I_w^v$ that satisfy Eq. \eqref{eq:interpolation_relation}. The SBP-FD junction is stable by the equality
\begin{align}
\| w(t)\|^2_{P_w} + \| u(t)\|^2_{P_u}  + \| v(t)\|^2_{P_v} = \| w^{(0)}\|^2_{P_w} +  \| u^{(0)}\|^2_{P_u} +  \| v^{(0)}\|^2_{P_v},    \label{thm1}
\end{align}
for all $t \geq 0$, if the penalty parameters are chosen as
\begin{equation}
\gamma_w = -\frac{\im}{2}, \quad \gamma_{uv} = \frac{\im}{2}, \quad \tau_w =\frac{\im}{2}, \quad \tau_{uv} = -\frac{\im}{2}. \notag
\end{equation}
\end{theorem}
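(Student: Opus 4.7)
The plan is to run the standard SBP-SAT energy argument and show that, with the stated penalty parameters and interpolation identity \eqref{eq:interpolation_relation}, the combined norm $\|w\|_{P_w}^2 + \|u\|_{P_u}^2 + \|v\|_{P_v}^2$ is a conserved quantity. Concretely, I would differentiate this sum in time, substitute \eqref{eq_w}--\eqref{eq_v}, and demonstrate that every surviving boundary and penalty term cancels.

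First I would compute $\frac{d}{dt}\|w\|_{P_w}^2 = (w, w_t)_{P_w} + (w_t, w)_{P_w}$ by multiplying \eqref{eq_w} from the left by $w^{*} P_w$ and adding the complex conjugate, and analogously for $u$ and $v$. Because $A_{x,\star}$ and $A_{y,\star}$ are real, symmetric and multiplied by $\im$, the bulk contributions of the Laplacian are anti-Hermitian and vanish when added to their conjugate transpose. What survives from the Laplacian part are the $S_\star$ boundary rows, namely $-\im\, w^{*}(I_x\otimes e_{0,w}e_{0,w}^{T} S_w)w$ from the $w$ equation and the analogous $e_{N,u}e_{N,u}^{T}S_u$, $e_{N,v}e_{N,v}^{T}S_v$ contributions from $u$ and $v$, all localized on the junction interface.

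Next I would collect these surface contributions together with the SAT penalty terms, grouped by bilinear structure: terms of type $w^{*}(\cdot)S_w w$, of type $u^{*}(\cdot)S_u u$ and $v^{*}(\cdot)S_v v$, and the cross terms that pair $w$ with $(u,v)$ through the interpolation operators. Using $I_w^w = I_u^u = I_v^v = \mathrm{Id}$, the compatibility relation \eqref{eq:interpolation_relation} is exactly what allows the cross contribution arising from the $w$ equation (which contains $I_{uv}^w$) to be identified with the adjoint of the cross contributions coming from the $u$ and $v$ equations (which contain $I_w^u,\, I_w^v$) under the $P_{x,\star}$ inner products. After this identification, each remaining boundary quantity appears with a coefficient that is a linear combination of $\gamma_w,\gamma_{uv},\tau_w,\tau_{uv}$ and their complex conjugates. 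The self-adjoint surface pieces demand $\gamma_w+\bar\gamma_w=0$ and similarly for the other parameters (so they must be purely imaginary), while the cross-term cancellation forces the specific balance $\gamma_w=-\gamma_{uv}$, $\tau_w=-\tau_{uv}$ together with $\gamma_w+\tau_w=0$, which is solved by $\gamma_w=-\im/2$, $\gamma_{uv}=\im/2$, $\tau_w=\im/2$, $\tau_{uv}=-\im/2$. This produces $\frac{d}{dt}\bigl(\|w\|_{P_w}^2+\|u\|_{P_u}^2+\|v\|_{P_v}^2\bigr)=0$, and integration in time gives \eqref{thm1}.

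The main obstacle I anticipate is purely bookkeeping: tracking every Kronecker factor in $x$ versus $y$ across the three blocks, carefully pairing each SAT term with the right trace contribution from its conjugate transpose, and rewriting every cross term on the $w$-side of the junction via \eqref{eq:interpolation_relation} so that the coefficient matching becomes transparent. The $I_x$ directions are passive (the $x$-refinement is identical across the junction, so the $x$-Kronecker factors simply factor through), which is what ultimately makes the argument reduce to the one-dimensional SBP-FD junction identity along $y$; once this reduction is made, the assignment of penalty parameters is forced and the proof closes.
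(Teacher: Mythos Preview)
Your plan is the paper's own: multiply each of \eqref{eq_w}--\eqref{eq_v} by its $P$-weighted adjoint, add the conjugate transpose, let the symmetric $A_\star$-parts cancel, and organise the surviving interface contributions as a Hermitian quadratic form in the traces $(w_0,u_N,v_N,(S_w w)_0,(S_u u)_N,(S_v v)_N)$ whose blocks $M_1,\dots,M_5$ must vanish; the identity \eqref{eq:interpolation_relation} is precisely what kills $M_4$ and $M_5$.

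Two details of your sketch will need correction when you carry it out. First, the block structure does not pair like with like: $M_4$ couples $\tau_w$ with $\gamma_{uv}^{*}$ and $M_5$ couples $\gamma_w^{*}$ with $\tau_{uv}$, while the ``diagonal'' blocks $M_1,M_2,M_3$ each contain a $\gamma$, a $\tau$, \emph{and} the $\pm\im$ flux from $D2_{y,\star}$, so neither ``$\gamma_\star+\bar\gamma_\star=0$'' nor ``$\gamma_w=-\gamma_{uv},\ \tau_w=-\tau_{uv}$'' is what actually falls out. The resulting linear system is underdetermined, so the stated penalty values are a convenient choice, not forced. Second, the $x$-direction is not passive: the interpolation operators $I_{uv}^w,I_w^u,I_w^v$ and relation \eqref{eq:interpolation_relation} act in $x$ (the direction \emph{along} the interface, where the SBP closures of $u,v$ abut the FD interior of $w$); it is the $y$-direction that furnishes the trace/flux structure through $e_{0,w},e_{N,u},e_{N,v}$ and $S_\star$. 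Keeping these roles straight is exactly the bookkeeping you flag as the main obstacle.
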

 
\begin{proof} 
In order to get a stable discretization we use an energy estimate. Multiplying equations \eqref{eq_w}-\eqref{eq_v} with $w^* P_w$, $u^* P_u$ and $v^* P_v$ from the left, respectively  and adding the transposes leads to a symmetric expression of the form
\begin{equation}\begin{aligned}
&\frac{d}{dt}\| w \|_{P_{w}}^2 + \frac{d}{dt}\| u \|_{P_{u}}^2+ \frac{d}{dt}\| v \|_{P_{v}}^2 = \notag \\ 
&\left( \begin{array}{c}
w_0 \\
u_N \\
v_N \\
(S_w w)_0 \\
(S_u u)_N \\
(S_v v)_N
\end{array} \right)^*
\left( \begin{array}{cccccc}
\multicolumn{3}{c}{}& M_1 &\multicolumn{2}{c}{\text{\framebox[1.4cm][c]{$M_4$}}}\\
\multicolumn{3}{c}{0}& \multirow{2}{*}{\begin{picture}(15,23)
\put(0,0){\framebox(15,23)[c]{$M_5$}}
\end{picture}} & M_2 &0\\ 
\multicolumn{3}{c}{}&  & 0 & M_3\\
M_1^*&\multicolumn{2}{c}{\text{\framebox[1.4cm][c]{$M_5^*$}}} & \multicolumn{3}{c}{}\\
\multirow{2}{*}{\begin{picture}(15,23)
\put(0,0){\framebox(15,23)[c]{$M_4^*$}}
\end{picture}}&M_2^*& 0 & \multicolumn{3}{c}{0}\\
&0&M_3^*& \multicolumn{3}{c}{}\\
\end{array} \right)
\left( \begin{array}{c}
w_0 \\
u_N \\
v_N \\
(S_w w)_0 \\
(S_u u)_N \\
(S_v v)_N
\end{array} \right).
\end{aligned}\end{equation}  
Here $w_0$ denotes the values of $w$ at the interface between blocks 2 \& 4 and blocks 1 \& 3 and $u_N$ and $v_N$ the values of $u$ and $v$ at this interface. In order to conserve the energy, we thus have to cancel out the following terms,
\begin{align}
M_1 &=  -\im P_{x,w} -\tau_w P_{x,w} I_w^w - \gamma_w^* I_w^{w T} P_{x,w},  \label{eqA1} \\
M_2 &=  \im P_{x,u} - \tau_{uv} P_{x,u} I_u^u - \gamma_{uv}^* I_u^{uT} P_{x,u}, \label{eqA2} \\
M_3 &= \im P_{x,v} - \tau_{uv} P_{x,v} I_v^v - \gamma_{uv}^* I_v^{vT} P_{x,v}, \label{eqA3}\\
M_4 &= \tau_w P_{x,w} I_{uv}^w + \gamma_{uv}^* \left( \begin{array}{cc} (I_w^u)^T P_{x,u} & (I_w^v)^T P_{x,v} \end{array} \right),\label{eqA4} \\
M_5 &= \gamma_w^* (I_{uv}^w)^TP_{x,w}  + \tau_{uv}^* \left( \begin{array}{c} P_{x,u} I_w^u  \\ P_{x,v} I_w^v  \end{array} \right).\label{eqA5} 
\end{align}
In order for the expressions given in Eq. \eqref{eqA1}-\eqref{eqA3} to be zero, we need the interpolation operators $I_w^w,I_u^u, I_v^v$ to be identity operators. For $M_4$ and $M_5$ in equations Eq.~\eqref{eqA4}-\eqref{eqA5} to be zero, we need
\begin{equation}\begin{aligned}\label{eq:penalty_interpolation}
	(I_{uv}^w)^TP_{x,w} = \left( \begin{array}{c} P_{x,u} I_w^u  \\ P_{x,v} I_w^v  \end{array} \right).
\end{aligned}\end{equation}
Moreover, the penalty parameters need to satisfy the relations 
\begin{equation}\begin{aligned}
	&-\im -\tau_w-\gamma_w^* = 0, \quad \im -\tau_{uv} -\gamma_{uv}^* = 0, \\
	&\tau_w = - \gamma_{uv}^*, \quad \tau_{uv} = \gamma_w^*.
\end{aligned}\end{equation}
These equations do not define the penalty parameters in a unique way. We may choose these parameters in the same way as for a usual SBP-SAT-interface \cite{Nissen11}, namely
\begin{equation}
	\gamma_w = -\frac{\im}{2}, \quad \gamma_{uv} = \frac{\im}{2}, \quad \tau_w =\frac{\im}{2}, \quad \tau_{uv} = -\frac{\im}{2}. \label{penalty_parameters}
\end{equation}
Moreover, note that Eq.~\eqref{eq:penalty_interpolation} can be rewritten as  Eq.~\eqref{eq:interpolation_relation}. Thus, $M_1$ to $M_5$ are zero. Integration in time yields the equality in Eq. \eqref{thm1}.
\newline \qed 
\end{proof}

\noindent
We also want to augment the theory to the case with different refinement levels illustrated in Fig.~\ref{grid4}. The same ansatz as for the case with a uniform grid is used, we only modify the interpolation operators in Eq.~\eqref{eq_w}-\eqref{eq_v}. The new operators are denoted $\tilde I_{uv}^w,\tilde I_{w}^u,\tilde I_{w}^v$ and the operators associated with the norms for the refined $u$ and $v$ are denoted $\tilde P_u, \tilde P_v$. To obtain a bound we will need the following expression,
\begin{equation}\label{eq:interpolation_withfine}
	\tilde I_{uv}^w = \left( \begin{array}{cc} P_{x,w}^{-1}(\tilde I_w^u)^T \tilde P_{x,u} & \hspace{5pt} P_{x,w}^{-1}(\tilde I_w^v)^T \tilde P_{x,v} \end{array} \right).
\end{equation}
which is the nonconforming equivalent to Eq. \eqref{eq:interpolation_relation}. Using the energy technique again we arrive at the following theorem. 

\begin{theorem} 
\label{th:sbp-fd-jct-tdse-diff-ref}
Consider the SBP-FD junction \eqref{eq_w}-\eqref{eq_v}, with operators $I_w^w,I_u^u,I_v^v$ given by identity matrices and interpolation operators $\tilde I_{uv}^w, \tilde I_w^u, \tilde I_w^v$ that satisfy Eq. \eqref{eq:interpolation_withfine}. The SBP-FD junction is stable by the equality
\begin{align}
\| w(t)\|^2_{P_w} + \| u(t)\|^2_{\tilde P_u}  + \| v(t)\|^2_{\tilde P_v} = \| w^{(0)}\|^2_{P_w} +  \| u^{(0)}\|^2_{\tilde P_u} +  \| v^{(0)}\|^2_{\tilde P_v},    \label{thm2}
\end{align}
for all $t \geq 0$, if the penalty parameters are chosen as
\begin{equation}
\gamma_w = -\frac{\im}{2}, \quad \gamma_{uv} = \frac{\im}{2}, \quad \tau_w =\frac{\im}{2}, \quad \tau_{uv} = -\frac{\im}{2}. \notag
\end{equation}
\end{theorem}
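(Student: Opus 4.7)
The plan is to mirror the proof of Theorem \ref{th:sbp-fd-jct-tdse-eq-ref} step by step, with the conforming norm operators $P_u,P_v$ and interpolation operators $I_w^u,I_w^v,I_{uv}^w$ replaced by their nonconforming counterparts $\tilde P_u,\tilde P_v,\tilde I_w^u,\tilde I_w^v,\tilde I_{uv}^w$. Concretely, I would multiply the modified version of Eq.~\eqref{eq_w} from the left by $w^* P_w$, and the modified versions of Eqs.~\eqref{eq_u}--\eqref{eq_v} by $u^* \tilde P_u$ and $v^* \tilde P_v$, respectively, then add each equation to its conjugate transpose. Because the volume operators $P_w,\tilde P_u,\tilde P_v$ are diagonal and positive definite, the left-hand side reduces to $\frac{d}{dt}(\|w\|_{P_w}^2+\|u\|_{\tilde P_u}^2+\|v\|_{\tilde P_v}^2)$. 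The interior terms with $D2_{x,\star}$ and $D2_{y,\star}$ are handled exactly as in the SBP energy analysis of \cite{Nissen11}: skew-Hermiticity of $\im D2_{\star}$ together with the $S_\star$-term in the definition of $D2_{y,\star}$ cancels all bulk contributions and leaves only boundary traces at the junction.

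Next, I would collect the remaining boundary contributions into a Hermitian form in the six-vector $(w_0,u_N,v_N,(S_w w)_0,(S_u u)_N,(S_v v)_N)^T$ and read off the analogues $\tilde M_1,\ldots,\tilde M_5$ of Eqs.~\eqref{eqA1}--\eqref{eqA5}. Since the inner interpolation operators $I_w^w,I_u^u,I_v^v$ and the associated weights $P_{x,w}$ on the single-block sides are unchanged, the diagonal blocks $\tilde M_1,\tilde M_2,\tilde M_3$ are formally identical to $M_1,M_2,M_3$ after the substitutions $P_{x,u}\to\tilde P_{x,u}$, $P_{x,v}\to\tilde P_{x,v}$; hence they vanish for the same reason, namely that $I_w^w,I_u^u,I_v^v$ are identity matrices and the penalty parameters satisfy $-\im-\tau_w-\gamma_w^*=0$ and $\im-\tau_{uv}-\gamma_{uv}^*=0$.

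For the off-diagonal blocks $\tilde M_4,\tilde M_5$, the same algebra as in Theorem~\ref{th:sbp-fd-jct-tdse-eq-ref} produces
\begin{equation*}
\tilde M_4 = \tau_w P_{x,w}\tilde I_{uv}^w + \gamma_{uv}^* \bigl(\, (\tilde I_w^u)^T \tilde P_{x,u} \quad (\tilde I_w^v)^T \tilde P_{x,v}\,\bigr),
\end{equation*}
and an analogous expression for $\tilde M_5$. Using the assumed nonconforming compatibility relation \eqref{eq:interpolation_withfine} to substitute $P_{x,w}\tilde I_{uv}^w=\bigl((\tilde I_w^u)^T\tilde P_{x,u}\ (\tilde I_w^v)^T\tilde P_{x,v}\bigr)$, the remaining scalar condition is $\tau_w=-\gamma_{uv}^*$ (and $\tau_{uv}=\gamma_w^*$ for $\tilde M_5$), which is satisfied by the choice \eqref{penalty_parameters}. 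Thus all $\tilde M_i$ vanish and the Hermitian form is identically zero.

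The main (though mild) obstacle is really bookkeeping: ensuring that the asymmetric replacement of $P_{x,u},P_{x,v}$ by $\tilde P_{x,u},\tilde P_{x,v}$ on the refined side—while keeping $P_{x,w}$ on the coarse side—propagates consistently through both $\tilde M_4$ and $\tilde M_5$, and that Eq.~\eqref{eq:interpolation_withfine} really is the correct nonconforming version of Eq.~\eqref{eq:interpolation_relation} that makes the pairing symmetric. Once this is verified, integrating $\frac{d}{dt}(\|w\|_{P_w}^2+\|u\|_{\tilde P_u}^2+\|v\|_{\tilde P_v}^2)=0$ from $0$ to $t$ yields the equality \eqref{thm2} and completes the proof.
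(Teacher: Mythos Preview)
Your proposal is correct and matches the paper's approach: the paper's proof is essentially the one-line remark ``follow the analysis of the proof of Theorem~\ref{th:sbp-fd-jct-tdse-eq-ref}'', which is exactly the energy-method bookkeeping you spell out, with $P_{x,u},P_{x,v},I_w^u,I_w^v,I_{uv}^w$ replaced by their tilde versions and Eq.~\eqref{eq:interpolation_withfine} playing the role of Eq.~\eqref{eq:interpolation_relation}. The only extra content in the paper's proof that you omit is an explicit construction $\tilde I_{uv}^w=I_{uv}^w\,\mathrm{diag}(I_{f2c},I_{f2c})$, $\tilde I_w^u=I_{c2f}I_w^u$, $\tilde I_w^v=I_{c2f}I_w^v$ showing that operators satisfying \eqref{eq:interpolation_withfine} actually exist; since the theorem statement \emph{assumes} \eqref{eq:interpolation_withfine}, this is not needed for the stability equality itself but is worth noting as the paper's concrete realization.
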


\begin{proof}
We follow the analysis of the proof of Theorem~\ref{th:sbp-fd-jct-tdse-eq-ref}. This leads to the same choice of penalty parameters \eqref{penalty_parameters} and the nonconforming equivalent expression of Eq. \eqref{eq:interpolation_relation}, given in Eq. \eqref{eq:interpolation_withfine}.
It can easily be verified that Eq. \eqref{eq:interpolation_withfine} is satisfied if we choose
\begin{equation}
	\tilde I_{uv}^w = I_{uv}^w \left(\begin{array}{cc}I_{f2c} & 0 \\ 0 & I_{f2c}\end{array}\right), \quad \tilde I_{w}^u =I_{c2f} I_{w}^u, \quad \tilde I_{w}^v =I_{c2f} I_{w}^v,
\end{equation}
where $I_{f2c}$ and $I_{c2f}$ are the interpolation operators from fine to coarse and from coarse to fine, respectively, that have been derived in \cite{Mattsson10} for SBP-SAT-interfaces.
\newline \qed
\end{proof}

\noindent
Hence, interpolation operators $I_{uv}^w, I_w^u,I_w^v$ that satisfy Eq. \eqref{eq:interpolation_relation} are necessary for a stable discretization. Given these stability requirements, the operators should be chosen such that the local accuracy of the stencil is preserved. In each point we want to approximate the value of the function. The only difficult part is the strip close to the interface where the operators associated with SBP-norms, $P_{x,u},P_{x,v}$, differ from the operator associated with the FD-norm, $P_{x,w}$. We therefore achieve full accuracy and stability when choosing both $I_{uv}^w$ and $I_u^w$ or $I_v^w$, respectively, to be rows of the identity matrix when we are at points where the operators for two norms are equal.

Hence, we only have to study a small strip close to the interface. In the second order case, there is only one point where the norms differ, and we do not actually have to interpolate since the $\frac{1}{2}$ in the SBP-norm nicely reflects the fact that we have two copies of the solution on the SBP side while only having one on the FD-side. For higher orders, the operators have the structure
\begin{align}
I_{uv}^{w} = \left(\begin{array}{ccccccc}
1 & & & & & &  \\
& \ddots & & & & & \\
& & 1 & & & & \\
& & & \bar I_{uv}^w & & & \\
& & & & 1 & &  \\
& & & & & \ddots  & \\
& & & & & & 1
\end{array}\right) \in \mathbb{R}^{n_{x,w}\times(n_{x,u}+n_{x,v})},
\end{align}
\begin{align}
\left(\begin{array}{c c} 
I_w^{u} \\
I_w^{v} \\
\end{array}\right) = \left(\begin{array}{ccccccc}
1 & & & & & &  \\
& \ddots & & & & & \\
& & 1 & & & & \\
& & & \left(\begin{array}{c c} 
\bar I_w^{u} \\
\bar I_w^{v} \\
\end{array}\right) & & & \\
& & & & 1 & &  \\
& & & & & \ddots  & \\
& & & & & & 1
\end{array}\right) \in \mathbb{R}^{(n_{x,u}+n_{x,v}) \times n_{x,w}}.
\end{align}
Here, $\bar I_{uv}^w$ is a $7\times 8$ matrix for the fourth order case and an $11\times 12$ matrix for the sixth order case that preserve order two or three, respectively. Similarly, $\left(\begin{array}{c c} 
\bar I_w^{u} \\
\bar I_w^{v} \\
\end{array}\right)$ is an $8\times 7$ matrix for the fourth order case and a $12\times 11$ matrix for the sixth order case. The order at the junction needs to be reduced compared to the inner stencil, however the accuracy as for the SBP-approximation close to interfaces can be maintained. The matrix $\left(\begin{array}{c c} 
\bar I_w^{u} \\
\bar I_w^{v} \\
\end{array}\right)$ is given in Appendix \ref{appendix_operators} for SBP operators of fourth and sixth order accuracy. Note that the corresponding matrix $\bar I_{uv}^w$ can be constructed from Eq.~\eqref{eq:interpolation_relation}. Further take notice that our interpolation operators couple blocks diagonally across the SBP-FD junction. As an example, consider Fig.~\ref{grid3} in which block 1 does not only interact with block 2 but also with block 4. It would be preferable from a performance point-of-view (to minimize block dependencies and communication) to couple block-wise along the interface. We have tried to accomplish this  but we could then only achieve a first-order accurate coupling at the junction, both for the fourth and the sixth order operators. 

\subsection{Global accuracy}
\label{sec:Accuracy}

For SBP operators, we have to distinguish between the order of accuracy of the inner stencil and the order of accuracy of the stencil close to the boundary. Moreover, we have to consider the accuracy of the SAT penalty terms. Let $2p$ be the order of the inner stencil. Then there is a boundary layer of $p$ points where the accuracy of the derivative approximation is only $p$. Let us now analyze the accuracy of the SAT terms for the block boundaries. The structure of the penalty terms given for SBP-FD junctions in \eqref{eq_w}-\eqref{eq_v} is the same as for boundaries between blocks of different refinement levels. The interpolation operators have in both cases order $2p$ for most points and order $p$ in the boundary layer.  The analysis is done for equation \eqref{eq_u} but the reasoning applies for \eqref{eq_w} and \eqref{eq_v} as well. Let us consider the first SAT term
\begin{equation}
   I_x\otimes \left(P_{y,u}^{-1} S_{u}^T\right)\left\{\left(I_u^u \otimes \left( e_{N,u} e_{N,u}^T\right)\right) u - \left(I_w^u \otimes \left( e_{N,u} e_{0,w}^T\right)\right) w \right\}.
\end{equation}
The term in the curly bracket includes the interpolation operator. Hence, the accuracy is of order $2p$ along the edge, except close to the corner where it is reduced to $p$. Since both $P_{y,u}^{-1}$ and  $S_{u}^T$ are of order $\frac{1}{h}$, the convergence order is reduced to $p-2$ close to the junction point.

Now, turn to the second penalty term,
\begin{equation}
  I_x \otimes P_{y,u}^{-1}  \left\{\left(I_u^u \otimes \left(e_{N,u} e_{N,u}^T  S_u\right)\right) u - \left(I _w^u\otimes \left(e_{N,u} e_{0,w}^T  S_w\right)\right) w \right\}.
\end{equation}
In this term, the derivative is computed first with an accuracy of $p+1$ and thereafter the interpolation is applied. This gives an approximation order for the expression in the curly brackets of $p$ close to the junction and $p+1$ along the edge. Finally, $P_{y,u}^{-1}$ is applied, reducing the order to $p-1$ or $p$, respectively. As a result of the interpolation, we get an order reduction of the scheme to $p-2$ at SBP-FD junction points or corners with different refinement levels. 

For the solution of the time-dependent problem (given a sufficiently accurate integration in time), we can expect the order of accuracy to be at least $p-2$, the lowest order present in the complete stencil. However, the accuracy of the numerical solution is often one or two orders higher than the order of the boundary approximation would suggest \cite{Gustafsson75, Gustafsson81, Svard06}. It has been shown for the Schr\"{o}dinger equation that two orders are gained for the treatment of outer boundaries in one dimension in \cite{Nissen12} and for interface treatment in one dimension in \cite{Nissen11}. Numerical simulations in two dimensions show the same behavior as the one-dimensional analysis  \cite{Nissen11,Nissen12}. To fully understand the two-dimensional behavior close to corner points and SBP-FD points further analysis needs to be carried out. In numerical simulations we observe overall higher convergence orders than expected, based on the reasoning in this section in combination with the theory for gaining orders of accuracy in the time propagation \cite{Gustafsson75,Gustafsson81,Nissen11,Nissen12}. Note that the number of grid points where the order of accuracy is lowered to $p-1$ or $p-2$ is independent of the grid size. This fact could explain the higher than expected order of accuracy in the $\ell_2$ norm. The problem with decreased order at corner points has also been observed by Kramer and co-workers \cite{Kramer09}. For their formulation with continuous stencils for first derivatives, they have observed the same maximum accuracy orders for the stencils at corner points as we have for this case. To fully understand the convergence behavior analysis of the full two-dimensional problem needs to be carried out.

To remedy the loss of accuracy we considered constructing interpolation operators of order $p+1$ on a wider boundary layer according to the algorithm outlined in \cite{Mattsson10}, but we conclude that no such operators exists as the resulting system of linear equations does not have a solution.

\begin{remark}
In~\cite{Nissen12} the difference between penalties for the Schr\"{o}dinger equation and the diffusion equation was discussed. It was pointed out that $S^T$ penalty terms are a necessity for the Schr\"{o}dinger equation due to its non-diffusive character, whereas SAT terms for the diffusion equation can be formulated without an $S^T$ term. In applications where the second derivative operator is associated with diffusion, such that the $S^T$ terms can be omitted, the lowest order of accuracy will thus be $p-1$ instead of $p-2$.  
\end{remark}

\subsection{Temporal discretization}

After discretization of Eq.~\eqref{TDSE} in space, we are left with the system of ordinary differential equations
\begin{equation}\label{eq:ode}
 \frac{\D}{\D t} \U = - \frac{\I}{\hbar} H \U,
\end{equation}
where $\U$ is the semi-discrete solution and $H$ is the approximated Hamiltonian. If the Hamiltonian is independent of time, the solution of \eqref{eq:ode} can be expressed as
\begin{equation}
	\U (t) = \exp\left(-\frac{\I}{\hbar} H t\right) \U(0). 
\end{equation}
In case $H$ is time-dependent, one can use the exponential form successively on small time intervals. Instead of using just $H$, one has to take a Magnus series expansion to get the exact solution. For numerical purposes, it suffices to take a truncated expansion (cf. \cite{Kormann11}).

Computing the exponential of the discrete Hamiltonian matrix is a computationally intensive task and direct methods are out of reach for realistic grid sizes. Since the matrix $H$ is sparse, Krylov methods provide an efficient alternative. In case the matrix is symmetric, one can use the Lanczos method. Otherwise, one has to take the Arnoldi method which is computationally more intense and has worse scalability properties.
For an SBP discretization, $H$ itself is not symmetric. However, it is symmetric in the norm associated with the SBP operator. We therefore use the Lanczos algorithm and base all norm computations on the SBP norm.

\section{First derivatives}\label{sec:first}

In this section, we consider the advection equation
\begin{equation}
  \label{eq:advection_equation}
  U_t = a_1 U_x + a_2 U_y,
\end{equation}
with initial and boundary conditions, as an example of an equation with first derivatives. Again we use the method of lines approach with finite difference methods in space. We use a fourth order accurate Runge-Kutta method in time. The spatial discretization with SBP-SAT finite differences is done in a similar way as for the Schr{\"o}dinger equation. The SBP stencils for the second derivatives are replaced by the corresponding stencils for the first derivatives. Here, the SBP-SAT boundary treatment becomes simpler since we only need one type of penalty terms that enforces continuity of the solution. 

The semi-discretization of Eq.~(\ref{eq:advection_equation}) for the SBP-FD junction mesh in Fig.~\ref{grid3} reads as follows
\begin{align}
w_t =&  \left\{ a_1 D_{x,w} \otimes I_y +  a_2 I_x \otimes D_{y,w}\right\} w \notag\\
&- \tau_w  I_x \otimes P_{y,w}^{-1}  \left\{ \left( I_w^w \otimes e_{0,w} e_{0,w}^T \right) w- 
 \left( I_{uv}^w \otimes e_{0,w} \left(\begin{array}{c} e_{N,u} \\ e_{N,v} \end{array} \right)^T \right) \left(\begin{array}{c} u\\v\end{array} \right) \right\}, \label{eq_w_adv} \\
u_t =&  \left\{ a_1 D_{x,u} \otimes I_y +  a_2 I_x \otimes D_{y,u}\right\} u \notag\\
&- \tau_{uv} I_x \otimes P_{y,u}^{-1}  \left\{(I_u^u \otimes e_{N,u} e_{N,u}^T ) u - (I _w^u\otimes e_{N,u} e_{0,w}^T ) w \right\},   \label{eq_u_adv} \\
v_t =&  \left\{a_1 D_{x,v} \otimes I_y + a_2 I_x \otimes D_{y,v}\right\} v \notag \\
&- \tau_{uv} I_x \otimes P_{y,v}^{-1} \left\{ (I_v^v \otimes e_{N,v} e_{N,v}^T ) v -(I_w^v \otimes e_{N,v} e_{0,w}^T) w \right\}, \label{eq_v_adv}  \\[10pt]
&t  \geq 0, w(0) = w^{(0)}, u(0) = u^{(0)}, v(0) = v^{(0)}. \notag 
\end{align}
The $D_{\star}$'s are approximations of the first derivative, satisfying the SBP property $D_{x,\star} = P_{x,\star}^{-1}Q_{x,\star}$, $D_{y,\star} = P_{y,\star}^{-1}Q_{y,\star}$. We have that $Q_{y,w}+Q_{y,w}^T = \mbox{diag}[-1,0, \cdots, 0]$, and $Q_{y,u}+Q_{y,u}^T = Q_{y,v}+Q_{y,v}^T = \mbox{diag}[0,0, \cdots, 0, 1]$. Note that the discretization in Eq.~\eqref{eq_w_adv}-\eqref{eq_v_adv} does not include an $S^T$ penalty. As a consequence only one order of accuracy is lost at junction- and corner points, i.e., the total accuracy of the derivative approximation is of order $p-1$. Since this is a discretization of a first derivative, from one-dimensional analysis we only expect to gain one order in the numerical solution with respect to the stencil order  \cite{Gustafsson75,Gustafsson81}. In the two-dimensional simulations in this paper we see convergence orders that are better, especially in the $\ell_2$ norm. The reason for this could be that the number of grid points with lower accuracy is fixed and thus play a less important role as the grid is refined. Two-dimensional analysis is necessary to get the full picture.

A stability estimate for Eq.~\eqref{eq_w_adv}-\eqref{eq_v_adv} can be derived using the energy method. As in the Schr\"{o}dinger-case we have omitted penalty terms for exterior boundaries as well as the interface coupling between $u$ and $v$. The stability analysis for the SBP-FD junction with a uniform mesh (Fig.~\ref{grid3}) yields the following theorem.

\begin{theorem}
\label{th:sbp-fd-jct-adv-eq-eq-ref}
Consider the SBP-FD junction discretization for the advection equation \eqref{eq_w_adv}-\eqref{eq_v_adv}, with operators $I_w^w,I_u^u,I_v^v$ given by identity matrices and interpolation operators $I_{uv}^w, I_w^u,I_w^v$ that satisfy Eq. \eqref{eq:interpolation_relation}. The SBP-FD junction is stable by the equality
\begin{align}
\| w(t)\|^2_{P_w} + \| u(t)\|^2_{P_u}  + \| v(t)\|^2_{P_v} = \| w^{(0)}\|^2_{P_w} +  \| u^{(0)}\|^2_{P_u} +  \| v^{(0)}\|^2_{P_v},    \label{thm3}
\end{align}
for all $t \geq 0$, if the penalty parameters are chosen as
\begin{equation}
\tau_w = -\frac{a_2}{2}, \quad \tau_{uv} = \frac{a_2}{2}. \notag
\end{equation}
\end{theorem}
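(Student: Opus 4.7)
The plan is to follow the energy method of Theorem \ref{th:sbp-fd-jct-tdse-eq-ref} essentially verbatim, with two simplifications: there are no $S^T$ penalty terms, so the boundary quadratic form lives on $(w_0, u_N, v_N)$ alone; and the natural penalty parameters will be real rather than purely imaginary, since the first-derivative SBP identity carries no factor of $\mathrm{i}$.

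The first step is to multiply \eqref{eq_w_adv}--\eqref{eq_v_adv} from the left by $w^*P_w$, $u^*P_u$, and $v^*P_v$, add each equation to its complex-conjugate transpose, and sum; the left-hand side becomes $\frac{d}{dt}(\|w\|_{P_w}^2 + \|u\|_{P_u}^2 + \|v\|_{P_v}^2)$. For the interior differentiation terms, I use the SBP identities $P_{y,\star}D_{y,\star}=Q_{y,\star}$ together with the stated relations on $Q_{y,\star}+Q_{y,\star}^T$ to localize the $a_2$-contributions to the junction, giving
\begin{equation*}
-a_2\, w_0^* P_{x,w} w_0 + a_2\, u_N^* P_{x,u} u_N + a_2\, v_N^* P_{x,v} v_N.
\end{equation*}
The $a_1$-contributions and the 3--4 SBP-SAT coupling are supported on other boundaries and are suppressed as in the Schr\"odinger proof.

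Adding the penalty contributions then produces a Hermitian quadratic form in $(w_0,u_N,v_N)^T$ with a block structure analogous to $M_1$--$M_4$ from Theorem \ref{th:sbp-fd-jct-tdse-eq-ref} (no $M_5$ block appears, since there are no $S^T$ terms). With $I_w^w=I_u^u=I_v^v=I$, the three diagonal cancellation conditions reduce to $\tau_w+\tau_w^*=-a_2$ and $\tau_{uv}+\tau_{uv}^*=a_2$, which admit the stated real solutions $\tau_w=-a_2/2$, $\tau_{uv}=a_2/2$. Since these values satisfy $\tau_w = -\tau_{uv}^*$, the off-diagonal cancellation condition collapses to $P_{x,w}I_{uv}^w = \left((I_w^u)^T P_{x,u}\ \ (I_w^v)^T P_{x,v}\right)$, which is precisely the interpolation relation \eqref{eq:interpolation_relation}. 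Integrating in time yields \eqref{thm3}.

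No serious analytical obstacle is anticipated. The main items requiring care are verifying that the $D_x$ operators contribute only at boundaries transverse to the junction and not at the junction itself, and confirming that the off-diagonal cancellation reduces to \eqref{eq:interpolation_relation} with the real penalties, in contrast to the purely imaginary penalties of the Schr\"odinger case.
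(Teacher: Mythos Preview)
Your proposal is correct and follows essentially the same route as the paper: apply the energy method, reduce to a quadratic form in $(w_0,u_N,v_N)$, cancel the diagonal blocks by the choices $\tau_w=-a_2/2$, $\tau_{uv}=a_2/2$, and cancel the off-diagonal block via $\tau_w=-\tau_{uv}$ together with relation~\eqref{eq:interpolation_relation}, then integrate in time. The only cosmetic discrepancy is that the paper still labels five blocks $M_1,\dots,M_5$ (with $M_4$ and $M_5$ the two off-diagonal pieces of the symmetric $3\times 3$ block matrix), whereas you speak of a single off-diagonal condition; your remark that ``no $M_5$ block appears'' is thus a slight mislabeling relative to the paper's notation, but the substance is identical.
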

 
\begin{proof} 
We follow the proof for the stability analysis for the Schr\"{o}dinger equation with uniform grid. Multiplying equations \eqref{eq_w_adv}-\eqref{eq_v_adv} with $w^* P_w$, $u^* P_u$ and $v^* P_v$ from the left and adding the transposes leads to the expression 
\begin{equation}\begin{aligned}
&\frac{d}{dt}\| w \|_{P_{w}}^2 + \frac{d}{dt}\| u \|_{P_{u}}^2+ \frac{d}{dt}\| v \|_{P_{v}}^2 = \notag \\ 
&\left( \begin{array}{c}
w_0 \\
u_N \\
v_N 
\end{array} \right)^T
\left( \begin{array}{cccccc}
M_1&\multicolumn{2}{c}{\text{\framebox[1.4cm][c]{$M_5$}}} \\ 
\multirow{2}{*}{\begin{picture}(15,23)
\put(0,0){\framebox(15,23)[c]{$M_4$}}
\end{picture}}&M_2& 0 \\ 
&0&M_3 \\ 
\end{array} \right)
\left( \begin{array}{c}
w_0 \\
u_N \\
v_N
\end{array} \right),
\end{aligned}\end{equation}  
where
\begin{align}
M_1 &= (-a_2 - 2\tau_w) P_{x,w}, \\
M_2 &= (a_2 - 2\tau_{uv}) P_{x,u}, \\
M_3 &= (a_2 - 2\tau_{uv}) P_{x,v}, \\
M_4 &= \tau_w (I_{uv}^w)^TP_{x,w}  +  \tau_{uv} \left( \begin{array}{c} P_{x,u} I_w^u  \\  P_{x,v} I_w^v  \end{array} \right), \\ 
M_5 &= \tau_w P_{x,w} I_{uv}^{w} + \tau_{uv} \left( \left(I_w^{u}\right)^T P_{x,u} \hspace{5pt}  \left(I_w^{v}\right)^T P_{x,v} \right), \label{eqA5_a}
\end{align}
assuming that $I_w^w$, $I_u^u$, $I_v^v$ are identity matrices. By using relation \eqref{eq:penalty_interpolation} $M_4$ and $M_5$ are zero if $\tau_{uv} = -\tau_w$. With $\tau_w = -\frac{a_2}{2}$, $\tau_{uv} = \frac{a_2}{2}$, $M_1$, $M_2$ and $M_3$ are zero. By integrating in time we arrive at Eq. \eqref{thm3}.
\newline \qed
\end{proof}

\noindent
For the stability analysis of the SBP-FD junction with different levels of refinement (Fig.~\ref{grid4}), we arrive at the following theorem.
\begin{theorem}
\label{th:sbp-fd-jct-adv-eq-diff-ref}
Consider the SBP-FD junction discretization for the advection equation \eqref{eq_w_adv}-\eqref{eq_v_adv}, with operators $I_w^w,I_u^u,I_v^v$ given by identity matrices and interpolation operators $\tilde I_{uv}^w, \tilde I_w^u, \tilde I_w^v$ that satisfy Eq. \eqref{eq:interpolation_withfine}. The SBP-FD junction is stable by the equality
\begin{align}
\| w(t)\|^2_{P_w} + \| u(t)\|^2_{\tilde P_u}  + \| v(t)\|^2_{\tilde P_v} = \| w^{(0)}\|^2_{P_w} +  \| u^{(0)}\|^2_{\tilde P_u} +  \| v^{(0)}\|^2_{\tilde P_v},    \label{thm4}
\end{align}
for all $t \geq 0$, if the penalty parameters are chosen as
\begin{equation}
\tau_w = -\frac{a_2}{2}, \quad \tau_{uv} = \frac{a_2}{2}. \notag
\end{equation}
\end{theorem}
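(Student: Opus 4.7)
The plan is to mirror the proof of Theorem~\ref{th:sbp-fd-jct-adv-eq-eq-ref} almost verbatim, replacing the norm matrices $P_u$, $P_v$ by their nonconforming analogues $\tilde P_u$, $\tilde P_v$ and the conforming interpolation operators $I_{uv}^w, I_w^u, I_w^v$ by the nonconforming operators $\tilde I_{uv}^w, \tilde I_w^u, \tilde I_w^v$. The only structural change is that the relation \eqref{eq:interpolation_relation} gets replaced by its nonconforming counterpart \eqref{eq:interpolation_withfine}, which was the sole place the equal-refinement assumption entered the energy argument in Theorem~\ref{th:sbp-fd-jct-adv-eq-eq-ref}.

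First I would multiply \eqref{eq_w_adv} from the left by $w^* P_w$, \eqref{eq_u_adv} by $u^* \tilde P_u$, and \eqref{eq_v_adv} by $v^* \tilde P_v$, then add the complex conjugate transposes. The bulk $D_{x,\star} \otimes I_y + I_x \otimes D_{y,\star}$ contributions combine with the SBP property $Q_{y,\star} + Q_{y,\star}^T$ to leave only boundary terms at the junction edge, exactly as in Theorem~\ref{th:sbp-fd-jct-adv-eq-eq-ref}. The penalty contributions then assemble, after collecting the boundary values $w_0, u_N, v_N$, into the same block matrix with blocks $M_1,\ldots,M_5$ as before, except that the $P_{x,u}, P_{x,v}$ that appear in $M_2, M_3, M_4, M_5$ are now the refined $\tilde P_{x,u}, \tilde P_{x,v}$ and the interpolation operators carry tildes.

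Next I would show that each block vanishes for the stated penalty parameters. The diagonal blocks $M_1, M_2, M_3$ depend only on the penalty constants and the local $P_{x,\star}$, so the choices $\tau_w = -a_2/2$ and $\tau_{uv} = a_2/2$ make them zero exactly as in Theorem~\ref{th:sbp-fd-jct-adv-eq-eq-ref}. For the off-diagonal blocks $M_4$ and $M_5$, substituting \eqref{eq:interpolation_withfine} directly into the expressions
\begin{equation}
M_4 = \tau_w (\tilde I_{uv}^w)^T P_{x,w} + \tau_{uv} \begin{pmatrix} \tilde P_{x,u} \tilde I_w^u \\ \tilde P_{x,v} \tilde I_w^v \end{pmatrix}, \qquad M_5 = \tau_w P_{x,w} \tilde I_{uv}^w + \tau_{uv} \bigl( (\tilde I_w^u)^T \tilde P_{x,u} \;\; (\tilde I_w^v)^T \tilde P_{x,v}\bigr),
\end{equation}
shows that $M_4 = M_5 = 0$ precisely when $\tau_{uv} = -\tau_w$, which is consistent with the chosen values. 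Integration in time then gives \eqref{thm4}.

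The only genuine concern is verifying that the nonconforming relation \eqref{eq:interpolation_withfine} still produces clean cancellation of the off-diagonal blocks, since the operators now map between spaces of different dimensions and the refined norms enter on the fine side. This is exactly the role played by the concrete construction $\tilde I_{uv}^w = I_{uv}^w \,\mathrm{diag}(I_{f2c}, I_{f2c})$ and $\tilde I_w^u = I_{c2f} I_w^u$, $\tilde I_w^v = I_{c2f} I_w^v$ invoked in the proof of Theorem~\ref{th:sbp-fd-jct-tdse-diff-ref}; the compatibility of the Mattsson--Carpenter fine-to-coarse and coarse-to-fine operators with the SBP norms guarantees \eqref{eq:interpolation_withfine}, so no new algebraic obstruction arises relative to the equal-refinement case. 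Since the advection penalty structure omits the $S^T$ terms entirely, the computation is in fact strictly simpler than in Theorem~\ref{th:sbp-fd-jct-tdse-diff-ref}.
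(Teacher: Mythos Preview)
Your proposal is correct and follows essentially the same approach as the paper, which simply states that the proof combines the energy argument of Theorem~\ref{th:sbp-fd-jct-adv-eq-eq-ref} with the nonconforming relation~\eqref{eq:interpolation_withfine} from Theorem~\ref{th:sbp-fd-jct-tdse-diff-ref} and then integrates in time. You have merely spelled out in detail what the paper leaves implicit; the block structure, the cancellation of $M_1,\dots,M_5$, and the choice of penalty parameters are exactly as you describe.
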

 
\begin{proof}
The proof leading to Eq. \eqref{thm4} follows the proofs for Theorem~\ref{th:sbp-fd-jct-adv-eq-eq-ref} and Theorem~\ref{th:sbp-fd-jct-tdse-diff-ref}. With the same choice of penalty parameters as for the situation in Fig.~\ref{grids}b and using relation \eqref{eq:interpolation_withfine} we arrive at Eq. \eqref{thm4} after integration in time.
\newline \qed
\end{proof}

\noindent
We remark that the penalty parameters close to the SBP-FD junction can be used along the rest of the interface as well, as was the case for the Schr\"{o}dinger equation. This makes the extension to include SBP-FD junctions in an already existing code with SBP-SAT interfaces straightforward. 

\section{Numerical convergence study}
\label{sec:num_convergence}

In this section, we provide numerical convergence studies for the free Schr{\"o}dinger equation and the advection equation. We study the three important cases given by the meshes in Fig. \ref{grids}. We use stencils with inner order two, four, and six. Each experiment starts with a coarse base grid where each block has $21 \times 21$ elements (refinement level 0). By successive isotropic refinement of the individual blocks in each grid we repeat the experiments up to refinement level 5 ($641 \times 641$ elements). Since we are focusing on the spatial discretization, we use time steps small enough for the temporal error to be negligible compared to the spatial error.

\subsection{Free Schr{\"o}dinger equation}
\label{sec:num_convergence:TDSE}
As initial value to Eq. \eqref{TDSE} with $V=0$, we use a Gaussian, 
 \begin{equation}\begin{aligned}
 	U(\x,0) =& \exp\left(-\alpha_x (x-x_0)^2+\I k_x (x-x_0) \right) \cdot \\ 
	&\exp\left(- \alpha_y (y-y_0)^2+\I k_y (y-y_0)\right). \label{Gaussian}
\end{aligned} \end{equation}
The equation is closed by periodic boundary conditions. 
The parameters are chosen as $\alpha_x = \alpha_y = 1$, $x_0 = y_0 = k_x = k_y = 0$ and the mesh covers the domain $[-10,10]\times[-10,10]$. This means that the wave packet is centered at the corner points marked in black in each respective grid in Fig.~\ref{grids}. All three meshes are badly suited for this kind of wave packet since there are difficult grid boundaries and corners right at the top of the function. These examples should be viewed as worst case scenarios to demonstrate the capabilities of our framework, not as good examples of a mesh fitted to such a function.

The simulation time is $t=0.05$. We present the errors and convergence rates compared to an analytical solution \cite{Tannor07} in Tables \ref{tab:schr_example_grid2}--\ref{tab:schr_example_grid4}. The expected order of accuracy for an interior stencil of order $2p$ is $p$ along edges, and $p-2$ at corner points for a second order derivative approximation. A general trend in the numerical experiments is that in $\ell_{\infty}$ norm we see a gain of two orders in accuracy compared to the accuracy at corner points. In the $\ell_2$ norm the accuracy order is often an additional order higher. This is probably due to the fact that the accuracy order is lower only in a limited number of points. The convergence rates are not precise for all cases, which can be explained by the fact that various error terms that converge at different rates are present. Similar behavior was seen in \cite{Nissen11, Nissen12}, where the lowest error terms were dominating only for very fine grids for spatial discretizations of order 6 and 8.

\begin{table}[H] \renewcommand{\arraystretch}{1.3}
\caption{Convergence for grid Fig.~\ref{grid2} for the Schr\"{o}dinger equation (Sec.~\ref{sec:num_convergence:TDSE}). The number of isotropic refinements done in each block is given in the first column.}
\label{tab:schr_example_grid2}
\centering \begin{tabular}{c||c||c|c||c|c} \hline
order&\bfseries  & \bfseries $\ell_2$ error & \bfseries conv. rate &\bfseries $\ell_{\infty}$ error & \bfseries conv. rate \\ 
\hline\hline 
2 &0 & $4.8 \cdot 10^{-2}$ & --- & $1.0 \cdot 10^{-1}$ & --- \\
&1 & $9.5 \cdot 10^{-3}$ & 2.3 & $1.9 \cdot 10^{-2}$ & 2.4 \\
&2 & $1.5 \cdot 10^{-3}$ & 2.7 & $4.1 \cdot 10^{-3}$ & 2.2 \\
&3 & $ 3.1 \cdot 10^{-4}$ & 2.3 & $8.4 \cdot 10^{-4}$ & 2.3 \\
&4 & $7.1 \cdot 10^{-5}$ & 2.1 & $3.1\cdot 10^{-4}$ & 1.4 \\
&5 & $1.7 \cdot 10^{-5}$ & 2.1 & $5.9 \cdot 10^{-5}$ & 2.4 \\
\hline 
4 &0 & $5.9 \cdot 10^{-2}$ & --- & $9.1 \cdot 10^{-2}$ & --- \\
&1 & $9.2  \cdot 10^{-3}$ & 2.7 & $1.8 \cdot 10^{-2}$ & 2.3 \\
&2 & $1.6  \cdot 10^{-3}$ & 2.6 & $6.8 \cdot 10^{-3}$ & 1.4 \\
&3 & $5.6 \cdot 10^{-4}$ & 1.5 & $4.8 \cdot 10^{-3}$ & 0.5 \\
&4 & $1.5 \cdot 10^{-4}$ & 1.9 & $2.6 \cdot 10^{-3}$ & 0.9 \\
&5 & $6.9 \cdot 10^{-6}$ & 4.4 & $2.4 \cdot 10^{-4}$ & 3.5 \\
\hline
6 &0 & $1.3 \cdot 10^{-1}$ & --- & $1.5 \cdot 10^{-1}$ & --- \\
&1 & $1.3 \cdot 10^{-2}$ & 3.3 & $1.7 \cdot 10^{-2}$ & 3.2 \\
&2 & $3.5 \cdot 10^{-4}$ & 5.2 & $8.9 \cdot 10^{-4}$ & 4.2 \\
&3 & $1.6 \cdot 10^{-5}$ & 4.4 & $1.1 \cdot 10^{-4}$ & 3.0 \\
&4 & $7.6 \cdot 10^{-7}$ & 4.4 & $5.4 \cdot 10^{-6}$ & 4.3 \\
&5 & $3.9 \cdot 10^{-8}$ & 4.3 & $3.6 \cdot 10^{-7}$ & 3.9 \\
\end{tabular} \end{table}


\begin{table}[H] \renewcommand{\arraystretch}{1.3}
\caption{Convergence for grid Fig.~\ref{grid3} for the Schr\"{o}dinger equation (Sec.~\ref{sec:num_convergence:TDSE}). The number of isotropic refinements done in each block is given in the first column.}
\label{tab:schr_example_grid3}
\centering \begin{tabular}{c||c||c|c||c|c} \hline
order&\bfseries  & \bfseries $\ell_2$ error & \bfseries conv. rate &\bfseries $\ell_{\infty}$ error & \bfseries conv. rate \\ 
\hline\hline 
2 &0 & $2.9\cdot 10^{-2}$ & --- & $2.2\cdot 10^{-2}$ & --- \\
   &1 & $6.1\cdot 10^{-3}$ & 2.2 & $5.8\cdot 10^{-3}$ & 1.9 \\
   &2 & $1.4\cdot 10^{-3}$ & 2.1 & $1.5\cdot 10^{-3}$ & 2.0 \\
   &3 & $3.3\cdot 10^{-4}$ & 2.1 & $3.7\cdot 10^{-3}$ & 2.0\\
   &4 & $7.9\cdot 10^{-5}$ & 2.0 & $9.2\cdot 10^{-5}$ & 2.0\\
   &5 & $2.0 \cdot 10^{-5}$ & 2.0 & $2.3 \cdot 10^{-5}$ & 2.0 \\
\hline 
4 &0 & $4.9\cdot 10^{-2}$ & --- & $5.5\cdot 10^{-2}$ & --- \\
   &1 & $7.3\cdot 10^{-3}$ & 2.8 & $1.4\cdot 10^{-2}$ & 2.0 \\
   &2 & $6.6\cdot 10^{-4}$ & 3.5 & $1.6\cdot 10^{-3}$ & 3.1 \\
   &3 & $8.4\cdot 10^{-5}$ & 3.0 & $4.6\cdot 10^{-4}$ & 1.8\\ 
   &4 & $1.0\cdot 10^{-5}$ & 3.0 & $1.1\cdot 10^{-4}$ & 2.0\\
   &5 & $1.3 \cdot 10^{-6}$ & 3.0 & $2.7 \cdot 10^{-5}$ & 2.0 \\
\hline
6 &0 & $1.1\cdot 10^{-1}$ & --- & $9.1\cdot 10^{-2}$ & --- \\
   &1 & $9.4\cdot 10^{-3}$ & 3.5 & $1.2\cdot 10^{-2}$ & 2.9 \\
   &2 & $2.9\cdot 10^{-4}$ & 5.0 & $7.9\cdot 10^{-4}$ & 3.9 \\
   &3 & $1.1\cdot 10^{-5}$ & 4.7 & $5.4\cdot 10^{-5}$ & 3.9\\
   &4 & $5.7\cdot 10^{-7}$ & 4.3 & $5.8\cdot 10^{-6}$ & 3.2\\
   &5 & $2.8 \cdot 10^{-8}$ & 4.4 & $5.6 \cdot 10^{-7}$ & 3.4 \\
\end{tabular} \end{table}

\begin{table}[H] \renewcommand{\arraystretch}{1.3}
\caption{Convergence for grid Fig.~\ref{grid4} for the Schr\"{o}dinger equation (Sec.~\ref{sec:num_convergence:TDSE}). The number of isotropic refinements done in each block is given in the first column.}
\label{tab:schr_example_grid4}
\centering \begin{tabular}{c||c||c|c||c|c} \hline
order&\bfseries  & \bfseries $\ell_2$ error & \bfseries conv. rate &\bfseries $\ell_{\infty}$ error & \bfseries conv. rate \\ 
\hline\hline 
2 &0 & $4.3 \cdot 10^{-2}$ & --- & $6.4 \cdot 10^{-2}$ & --- \\
   &1 & $6.4 \cdot 10^{-3}$ & 2.7 & $1.5 \cdot 10^{-2}$ & 2.1 \\
   &2 & $1.1 \cdot 10^{-3}$ & 2.6 & $1.8 \cdot 10^{-3}$ & 3.1 \\
   &3 & $2.5 \cdot 10^{-4}$ & 2.1 & $3.5 \cdot 10^{-4}$ & 2.3\\
   &4 & $5.8 \cdot 10^{-5}$ & 2.1 & $1.1 \cdot 10^{-4}$ & 1.7 \\
   &5 & $1.4 \cdot 10^{-5}$ & 2.0 & $3.1 \cdot 10^{-5}$ & 1.8 \\
\hline 
4 &0 & $5.8 \cdot 10^{-2}$ & --- & $1.2 \cdot 10^{-1}$ & --- \\
   &1 & $5.3 \cdot 10^{-3}$ & 3.5 & $1.3 \cdot 10^{-2}$ & 3.2 \\
   &2 & $5.3 \cdot 10^{-4}$ & 3.3 & $1.6 \cdot 10^{-3}$ & 3.0 \\
   &3 & $8.8 \cdot 10^{-5}$ & 2.6 & $4.8 \cdot 10^{-4}$ & 1.7\\
   &4 & $1.0 \cdot 10^{-5}$ & 3.1 & $1.2 \cdot 10^{-4}$ & 2.0\\
   &5 & $1.3 \cdot 10^{-6}$ & 3.0 & $3.3 \cdot 10^{-5}$ & 1.9 \\
\hline
6 &0 & $1.3 \cdot 10^{-1}$ & --- & $2.8 \cdot 10^{-1}$ & --- \\
   &1 & $1.9 \cdot 10^{-2}$ & 2.8 & $3.9 \cdot 10^{-2}$ & 2.9 \\
   &2 & $1.7 \cdot 10^{-3}$ & 3.5 & $8.4 \cdot 10^{-3}$ & 2.2 \\
   &3 & $1.4 \cdot 10^{-4}$ & 3.6 & $6.9 \cdot 10^{-4}$ & 3.6\\
   &4 & $9.5 \cdot 10^{-6}$ & 3.9 & $4.5 \cdot 10^{-5}$ & 3.9\\
   &5 & $7.6 \cdot 10^{-7}$ & 3.6 & $5.8 \cdot 10^{-6}$ & 3.0
\end{tabular} \end{table}

\subsection{Advection equation}
\label{subs:advection_equation}

With $a_1 = -2$, $a_2 = -\frac{6}{23}$ in Eq.~\eqref{eq:advection_equation}, an analytic solution to the advection equation is given by
\[
	U(x,y,t) = e^{-50 \left(\left(x- \frac{4}{5} - 2 t\right)^2 + \left(y - \frac{4}{5} - \frac{6}{23} t\right)^2\right)}.
\]
In this example, we let the grids cover the domain $[0,4] \times [0,4]$ and use periodic boundary conditions. The simulation time is $t = 4.6$; at this time the solution is centered exactly at the previously described critical corners. The experiments are repeated six times with isotropic refinement in each repetition. 


Tables \ref{tab:adv_example_grid2}--\ref{tab:adv_example_grid4} list the errors and rates of convergence of the numerical solution in $\ell_2$ norm and $\ell_\infty$ norm. For first derivatives the expected convergence rate along edges is $p$ for a $2p$ interior stencil, and $p-1$ at corner points as presented in sec. \ref{sec:Accuracy}. The numerical simulations show that we obtain convergence rates that are approximately one order higher than the expected rates along edges, or two orders higher than expected at corner points. Therefore the lower accuracy close to corner points does not seem to affect the overall accuracy as strongly for the advection equation as for the Schr\"{o}dinger equation. This is likely due to that here we only have one penalty term, and it does not contain an $S^T$ term, thus affecting fewer grid points around the interface. Another reason could be that we have not entered the convergence region where the errors due to the grid points with lower order accuracy are dominating. Compared to the experiment with the Schr{\"o}dinger equation the corner point is also less important since the solution is only centred at the corner point at the final time. Similarly as for the Schr\"{o}dinger equation, the $\ell_2$ convergence rates are somewhat higher than the ones in $\ell_{\infty}$ norm, likely due to the finite number of grid points with lower accuracy order. 




\begin{table}[H] \renewcommand{\arraystretch}{1.3}
\caption{Convergence for grid Fig.~\ref{grid2} for the advection equation (Sec.~\ref{subs:advection_equation}). The number of isotropic refinements done in each block is given in the first column.}
\label{tab:adv_example_grid2}
\centering \begin{tabular}{c||c||c|c||c|c} \hline
order&\bfseries  & \bfseries $\ell_2$ error & \bfseries conv. rate &\bfseries $\ell_{\infty}$ error & \bfseries conv. rate \\ 
\hline\hline 
2 &0 & $2.7 \cdot 10^{-1}$ & --- & $ 8.5 \cdot 10^{-1}$ & --- \\
&1 & $ 2.0 \cdot 10^{-1}$ & 0.4 & $ 7.0 \cdot 10^{-1}$ & 0.3 \\
&2 & $ 1.3 \cdot 10^{-1}$ & 0.7 & $ 4.9 \cdot 10^{-1}$ & 0.5 \\
&3 & $  5.3 \cdot 10^{-2}$ & 1.3 & $ 2.8 \cdot 10^{-1}$ & 0.8 \\
&4 & $ 1.5 \cdot 10^{-2}$ & 1.8 & $8.8 \cdot 10^{-2}$ & 1.7 \\
&5 & $ 3.7 \cdot 10^{-3}$ & 2.0 & $ 2.2\cdot 10^{-2}$ & 2.0 \\
\hline 
4 &0 & $2.0 \cdot 10^{-1}$ & --- & $ 5.4 \cdot 10^{-1}$ & --- \\
&1 & $ 7.6 \cdot 10^{-2}$ & 1.4 & $ 2.7 \cdot 10^{-1}$ & 1.2 \\
&2 & $ 1.2 \cdot 10^{-2}$ & 2.7 & $ 6.7 \cdot 10^{-2}$ & 2.0 \\
&3 & $ 7.7 \cdot 10^{-4}$ & 3.9 & $ 4.9 \cdot 10^{-3}$ & 3.8 \\
&4 & $ 5.0 \cdot 10^{-5}$ & 4.0 & $ 3.5 \cdot 10^{-4}$ & 3.8 \\
&5 & $ 3.7 \cdot 10^{-6}$ & 3.7& $ 3.5 \cdot 10^{-5}$ & 3.3 \\
\hline
6 &0 & $ 1.6 \cdot 10^{-1}$ & --- & $ 5.4 \cdot 10^{-1}$ & --- \\
&1 & $ 3.6 \cdot 10^{-2}$ & 2.2 & $ 1.4 \cdot 10^{-1}$ & 2.0 \\
&2 & $ 2.0 \cdot 10^{-3}$ & 4.2 & $ 1.8 \cdot 10^{-2}$ & 2.9 \\
&3 & $ 7.0 \cdot 10^{-5}$ & 4.9 & $ 1.2 \cdot 10^{-3}$ & 3.9 \\
&4 & $ 3.0 \cdot 10^{-6}$ & 4.6 & $ 8.3 \cdot 10^{-5}$ & 3.8 \\
& 5 & $1.9 \cdot 10^{-7}$ & 4.0 & $5.6 \cdot 10^{-6}$ & 3.9\\
\end{tabular} \end{table}

\begin{table}[H] \renewcommand{\arraystretch}{1.3}
\caption{Convergence for grid Fig.~\ref{grid3} for the advection equation (Sec.~\ref{subs:advection_equation}). The number of isotropic refinements done in each block is given in the first column.}
\label{tab:adv_example_grid3}
\centering \begin{tabular}{c||c||c|c||c|c} \hline
order&\bfseries  & \bfseries $\ell_2$ error & \bfseries conv. rate &\bfseries $\ell_{\infty}$ error & \bfseries conv. rate \\ 
\hline\hline 
2 &0 & $ 2.6 \cdot 10^{-1}$ & --- & $8.5 \cdot 10^{-1}$ & --- \\
&1 & $ 2.0 \cdot 10^{-1}$ & 0.4 & $ 7.0 \cdot 10^{-1}$ & 0.3 \\
&2 & $ 1.3 \cdot 10^{-1}$ & 0.7 & $4.9 \cdot 10^{-1}$ & 0.5 \\
&3 & $  5.3 \cdot 10^{-2}$ & 1.3 & $ 2.8 \cdot 10^{-1}$ & 0.8 \\
&4 & $ 1.5 \cdot 10^{-2}$ & 1.8 & $8.8 \cdot 10^{-2}$ & 1.7 \\
&5 & $ 3.7 \cdot 10^{-3}$ & 2.0 & $2.2 \cdot 10^{-2}$ & 2.0 \\
\hline 
4 &0 & $1.9 \cdot 10^{-1}$ & --- & $ 6.4 \cdot 10^{-1}$ & --- \\
&1 & $  7.5 \cdot 10^{-2}$ & 1.4 & $ 2.5 \cdot 10^{-1}$ & 1.3 \\
&2 & $  1.1 \cdot 10^{-2}$ & 2.7 & $ 6.7 \cdot 10^{-2}$ & 1.9 \\
&3 & $ 7.7 \cdot 10^{-4}$ & 3.9 & $ 5.1 \cdot 10^{-3}$ & 3.7 \\
&4 & $ 4.9 \cdot 10^{-5}$ & 4.0 & $ 3.8 \cdot 10^{-4}$ & 3.7 \\
&5 & $ 3.6 \cdot 10^{-6}$ & 3.8 & $ 3.6 \cdot 10^{-5}$ & 3.1 \\
\hline
6 &0 & $ 1.6 \cdot 10^{-1}$ & --- & $ 5.9 \cdot 10^{-1}$ & --- \\
&1 & $ 3.5 \cdot 10^{-2}$ & 2.2 & $ 1.5 \cdot 10^{-1}$ & 2.0 \\
&2 & $ 2.0 \cdot 10^{-3}$ & 4.2 & $ 2.0 \cdot 10^{-2}$ & 2.9 \\
&3 & $ 8.8 \cdot 10^{-5}$ & 4.5 & $ 1.6 \cdot 10^{-3}$ & 3.6 \\
&4 & $ 3.7 \cdot 10^{-6}$ & 4.6 & $ 1.4 \cdot 10^{-4}$ & 3.8 \\
&5 & $ 2.0 \cdot 10^{-7}$ & 4.2  & $6.8 \cdot 10^{-6}$ & 4.3\\
\end{tabular} \end{table}

\begin{table}[H] \renewcommand{\arraystretch}{1.3}
\caption{Convergence for grid Fig.~\ref{grid4} for the advection equation (Sec.~\ref{subs:advection_equation}). The number of isotropic refinements done in each block is given in the first column.}
\label{tab:adv_example_grid4}
\centering \begin{tabular}{c||c||c|c||c|c} \hline
order&\bfseries  & \bfseries $\ell_2$ error & \bfseries conv. rate &\bfseries $\ell_{\infty}$ error & \bfseries conv. rate \\ 
\hline\hline 
2 &0 & $ 3.5 \cdot 10^{-1}$ & --- & $ 7.2 \cdot 10^{-1}$ & --- \\
&1 & $ 2.1 \cdot 10^{-1}$ & 0.7 & $ 4.9 \cdot 10^{-1}$ & 0.6 \\
&2 & $ 8.7 \cdot 10^{-2}$ & 1.3 & $ 2.8 \cdot 10^{-1}$ & 0.8 \\
&3 & $  2.4 \cdot 10^{-2}$ & 1.8 & $ 8.8 \cdot 10^{-2}$ & 1.7 \\
&4 & $ 6.0 \cdot 10^{-3}$ & 2.0 & $ 2.2\cdot 10^{-2}$ & 2.0 \\
&5 & $ 1.5 \cdot 10^{-3}$ & 2.0 & $5.5\cdot 10^{-3}$ & 2.0 \\
\hline 
4 &0 & $1.4 \cdot 10^{-1}$ & --- & $ 2.7 \cdot 10^{-1}$ & --- \\
&1 & $  2.2 \cdot 10^{-2}$ & 2.7 & $ 7.8 \cdot 10^{-2}$ & 1.8 \\
&2 & $  1.5 \cdot 10^{-3}$ & 3.8 & $ 9.8 \cdot 10^{-3}$ & 3.0 \\
&3 & $ 1.0 \cdot 10^{-4}$ & 3.9 & $ 1.3 \cdot 10^{-3}$ & 3.0 \\
&4 & $ 7.7 \cdot 10^{-6}$ & 3.7 & $ 1.9 \cdot 10^{-4}$ & 2.7 \\
&5 & $ 8.2 \cdot 10^{-7}$ & 3.2 & $ 3.6 \cdot 10^{-5}$ & 2.4 \\
\hline
6 &0 & $ 6.8 \cdot 10^{-2}$ & --- & $ 2.0 \cdot 10^{-1}$ & --- \\
&1 & $ 6.2 \cdot 10^{-3}$ & 3.5 & $ 4.0 \cdot 10^{-2}$ & 2.3 \\
&2 & $ 3.6 \cdot 10^{-4}$ & 4.1 & $ 4.9 \cdot 10^{-3}$ & 3.1 \\
&3 & $ 2.1 \cdot 10^{-5}$ & 4.1 & $ 6.3 \cdot 10^{-4}$ & 3.0 \\
&4 & $ 1.1 \cdot 10^{-6}$ & 4.2 & $ 4.2 \cdot 10^{-5}$ & 3.9 \\
&5 & $ 5.5 \cdot 10^{-8}$ & 4.3 & $2.6 \cdot 10^{-6}$ & 4.0\\
\end{tabular} \end{table}

\section{Adaptivity}\label{sec:adaptivity}

In this section, we discuss how to automatically generate and evolve problem-dependent meshes. Firstly, we explain how we estimate the error in order to have a measure of the quality of a grid. Secondly, for the advection equation, we compare the quality of a solution on a grid that includes SBP-FD junctions with a simple solution that closes each block with an SBP-SAT interface. Finally, for the Schr{\"o}dinger equation, we show two examples of the solution of a quantum harmonic oscillator, one on a manually created grid and one on an automatically generated mesh.

\subsection{Error estimation}\label{sec:error_estimate}

To adapt a grid to the shape of a solution, we need a measure of the error for a given mesh. For simplicity and efficiency, we estimate the error at the grid points of the present mesh. Note that this can be problematic when the solutions are highly oscillatory. In order to make sure we do not fail to capture oscillations that occur on a scale that is not resolved by the given mesh, special care has to be taken when choosing the initial mesh. If the initial mesh fully resolves all the oscillations and we update the mesh in sufficiently close intervals, we will be able to capture emerging oscillations. 

We base the error estimate on the residual. In case of an approximate solution that is defined continuously on the whole domain, the residual is defined as the difference between the continuous differential operator and the approximate difference operator applied to the approximate solution. This is possible for finite element approximations where a representation of the approximate solution based on some basis functions is known (see \cite{Kormann12}). In our finite difference setting, we instead use a better approximation of the operator as a reference.  

We compute the residual at point $\x_j \in X$ as
\begin{equation}
	R(\x_j,t) =  - \sum_{i=1}^d \underbrace{\left((A_i^{2p} \V)_j - (A_i^{2p+2}\V)_j)\right)}_{:= R_i(\x_j,t)},
\end{equation}
where $A_i^{\star}$ denotes an approximation of the (scaled) derivative operator of the order in the superscript ($\star$) in dimension $i$ (one-sided at all the block boundaries), $\V$ the fully discrete solution, and $2p$ is the order of the inner finite difference stencils used in the simulation. Note that terms including the value (instead of derivatives) of $U$ may occur when we have a potential operator, $V$, in the Schr{\"o}dinger equation. However, we do not get any contribution from those terms since the application of the potential operator is --- seen pointwise --- done without error.

Now we want to use the residual error estimator to decide where to refine the grid. For this purpose we compute the residual block-wise, estimating the block error using the one-sided SBP stencils we have at hand. Observe that in order to estimate the residual error in a block we only need information that is immediately available within that block, with no need for data resident in other blocks. This is an important aspect in a large scale parallel implementation, since such data dependencies would require communication between processors. In order to be able to decide locally for each block whether or not it should be refined, we use a weighted threshold based on the $\ell_2$ norm of the residual on each block. Given a global tolerance that the $\ell_2$ norm of the error on the whole domain shall meet, we allow for a block-wise error according to the block's fraction of the total solution volume. However, the block-wise computed $\ell_2$ norm gives a pessimistic estimate for how the error in the derivatives affects the error in the time propagation. The reason is that we only use one-sided stencils at some block boundaries and in a propagation one can gain up to one (for first derivatives) or two (for second derivatives) orders of accuracy compared to the order of the discretization at the boundaries. Therefore, we scale the error down by a factor $\mathrm{vol}(block)^{q/d}$ at the points where one-sided differences are applied, where $q=1$ for first derivatives and $q=2$ for second derivatives. Moreover, recall that we do not want to refine the blocks isotropically. Instead, we always refine in the direction where the error is the largest. For this purpose, we use $R_i(\x_j,t)$ to estimate the error in dimension $i$.


So far, we have only studied the residual. In \cite{Kormann10}, it is shown for the Schr{\"o}dinger equation how the residual relates to the error due to spatial discretization after time $T_{\max}$
\begin{equation}
	 \|e(T_{\max})\|_{\ell_2} \leq \int_0^{T_{\max}} \sqrt{\sum_{\x_j \in X} \mathrm{vol}(B_j) |R(\x_j,\tau)|^2} \, \D \tau,
\end{equation}
where $\mathrm{vol}(B_j)$ is the volume of the block that $\x_j$ belongs to. In practice, we further discretize in time as well, to obtain
\begin{equation}
	\|e(T_{\max})\|_{\ell_2} \leq \sum_{k=1}^{N_t} (\Delta t)_k \sqrt{\sum_{\x_j \in X} \mathrm{vol}(B_j) |R(\x_j,t_k)|^2},
\end{equation}
where $N_t$ is the number of time steps and $t_k, (\Delta t)_k$ are the temporal grid points and the temporal grid size, respectively. If we want to meet a certain tolerance for the error at time $T_{\max}$, we have to make sure that the residual in step $k$ is less than that tolerance times $\frac{(\Delta t)_k}{T_{\max}}$. The same estimate applies for the advection equation as long as boundary conditions that ensure time-reversibility and norm conservation are used.

\subsection{Numerical examples}

\subsubsection{Reduced number of grid points with SBP-FD junction}
\label{sec:adv_eq_application}
To illustrate the usefulness of the SBP-FD junction treatment we consider the advection equation on a more complex computational domain. The grid has three different levels of refinement. Figs. \ref{fig:adv_eq_mesh_naive} and  \ref{fig:adv_eq_mesh_SBPFD} illustrate how the meshes are constructed with and without use of the SBP-FD junction technique. We refer to these meshes as the naive mesh and the SBP-FD junction mesh, respectively. To numerically solve the advection equation on the naive mesh with the techniques described in this paper it is necessary to introduce sub-domains and SBP-SAT interfaces as in Fig. \ref{fig:adv_eq_mesh_naive}, whereas the SBP-FD junction mesh has a reduced number of SBP-SAT interfaces, see Fig. \ref{fig:adv_eq_mesh_SBPFD}. Since the local order of accuracy is decreased in the vicinity of SBP-SAT interfaces it should be beneficial to reduce these to a minimum. Also, introducing the extra SBP-SAT interfaces as in the naive mesh necessarily increases the number of grid points on the coarsest level of refinement. For these reasons the use of the SBP-FD junction treatment should yield a more efficient numerical method. We take $a_1 = -2 , a_2 = -1$ in \eqref{eq:advection_equation} and set the incoming characteristics to zero. The initial data  is given as.
\begin{equation*}\begin{aligned}
u_0(x,y) =& \exp(-200((x-0.25)^2 + (y-0.75)^2)) + \\&\exp(-600((x-0.625)^2 + (y-0.325)^2)) +\\& \exp(-1800((x-0.875)^2 + (y-0.0625)^2)).
\end{aligned}\end{equation*}
 Note that the initial data is chosen such that the gradient of the solution is larger where the mesh is finer. The numerical solution is propagated in time until $t = 0.12$ using fourth order SBP operators to discretize spatial derivatives and the SAT terms derived in Sec. \ref{sec:first} to couple grid patches together. At exterior boundaries, Dirichlet conditions are imposed using SAT terms at the left and bottom boundaries. The solution is computed on the naive mesh and the SBP-FD junction mesh and for each mesh the total number of grid points required to obtain relative max errors of $1 \cdot 10^{-3},1 \cdot 10^{-2}$ and $1 \cdot 10^{-1}$ is recorded. Table \ref{tab:adv_eq_adaptive_mesh} displays the required number of grid points to achieve the required relative max errors. As can be seen, the use of the SBP-FD junction treatment reduces the required number of grid points by about $20 \%$. For a computational domain with more refinement levels or more than two spatial dimensions this indicates that the SBP-FD technique will reduce the number of grid points even more due to reducing the number of SBP-SAT interfaces.                 
\begin{table}
\caption{Comparison of number of grid points for the naive mesh and the SBP-FD junction mesh for the simulations (Sec.~\ref{sec:adv_eq_application}).}
\begin{tabular}{lllllll} 
\noalign{\smallskip}\hline\noalign{\smallskip}
Relative max error  & No. grid points (SBP-FD junction mesh)& No. grid points (Naive mesh)\\
\noalign{\smallskip}\hline\noalign{\smallskip}
$1 \cdot 10^{-1}$  	&4327&5209\\
$1 \cdot 10^{-2}$    	&16647&20009\\
$1 \cdot 10^{-3}$    	&65287&78409\\
\noalign{\smallskip}\hline
\end{tabular}
\label{tab:adv_eq_adaptive_mesh}
\end{table}

\begin{figure}[!t]
\centering
\subfloat[][]{
	\includegraphics[width=2.2in]{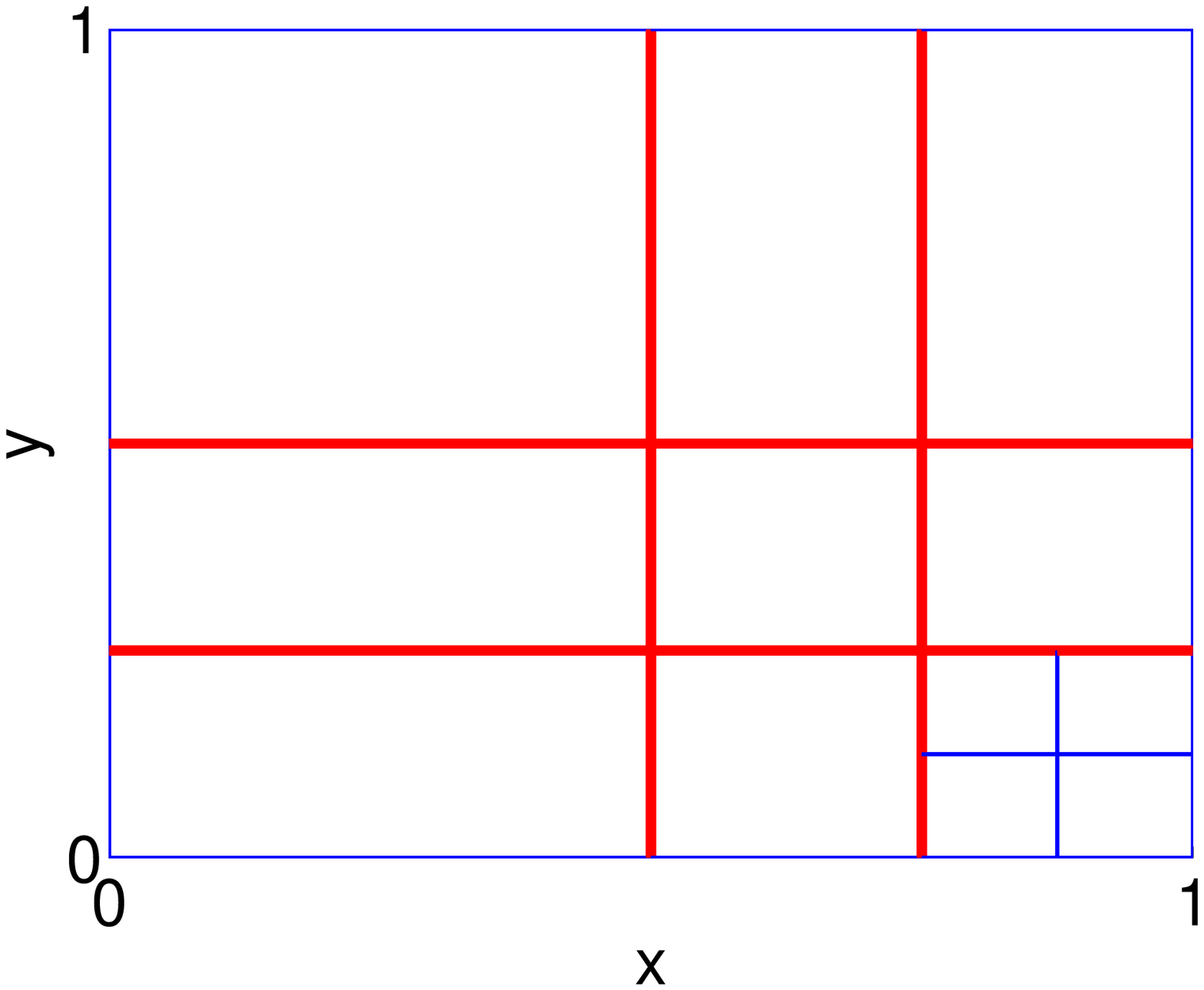}
	\label{fig:adv_eq_mesh_naive}
} \hspace{0.1in}
\subfloat[][]{
	\includegraphics[width=2.2in]{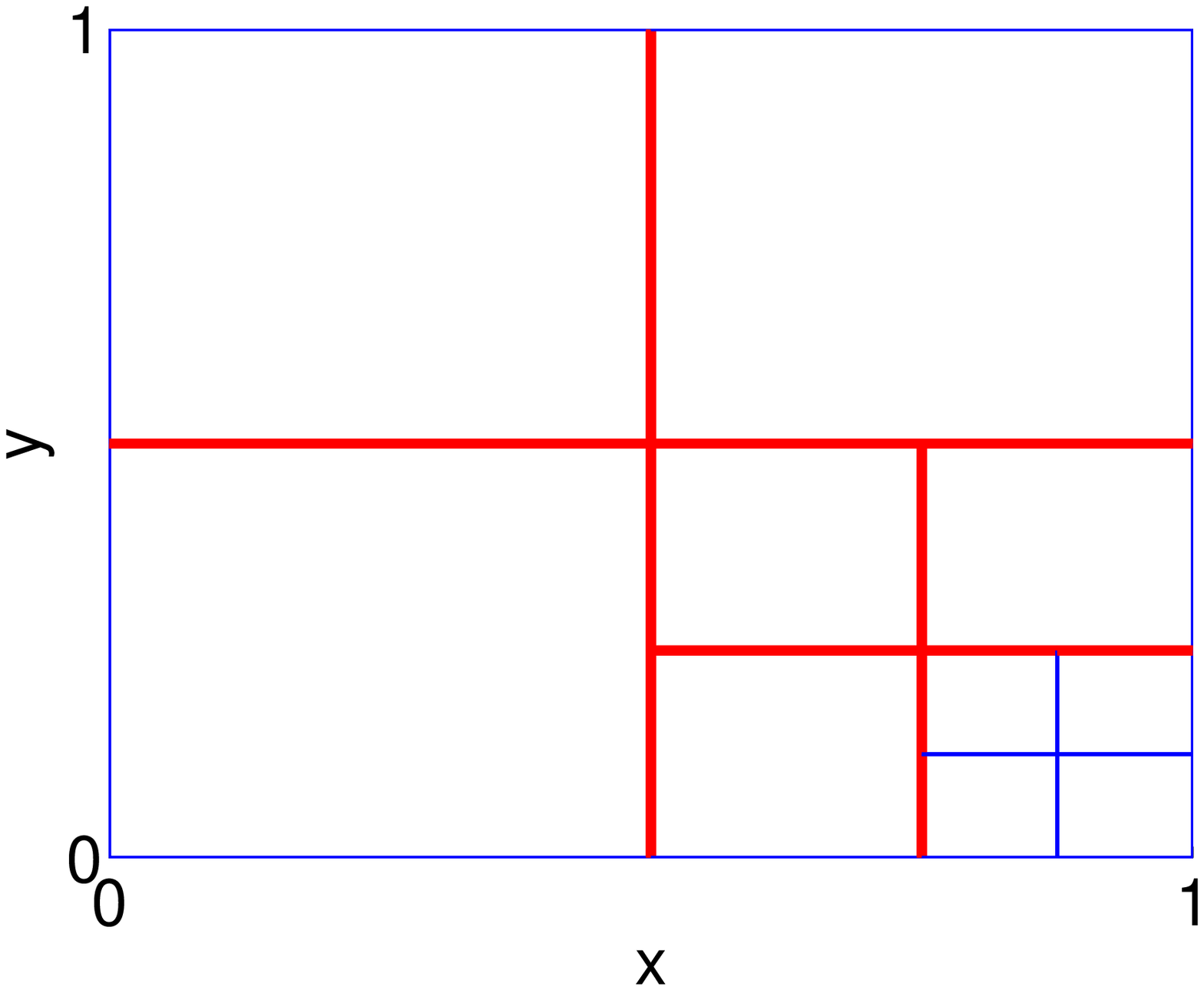}
	\label{fig:adv_eq_mesh_SBPFD}
}
	\caption{Meshes used for the numerical experiments with the advective equation in the experiments in Section~\ref{sec:adv_eq_application}. (a) Naive mesh. (b) SBP-FD junction mesh.}

\end{figure}


%
%

\subsubsection{Error distribution on an adaptive mesh}
\label{sec:ho}
 
Next we study an adaptive discretization of the Schr\"{o}dinger equation. In this example, we consider a quantum harmonic oscillator, described by
\begin{equation}
U_t = \im U_{xx}+\im U_{yy}-\im \left(\frac{m a_x^2}{2} x^2 + \frac{m a_y^2}{2} y^2\right) U,
\end{equation}
with $a_x = a_y = 8$. As initial value we take a Gaussian as given by Eq.~\eqref{Gaussian}, with parameters $\alpha_x = \alpha_y  = 2$, $x_0 = y_0 = 1$, and $k_x = k_y = 0$.
The wave packet has a momentum with a $45 \,^{\circ}$ angle to the coordinate axis and is traveling from $(1,1)$ to $(-1,-1)$. The mesh is illustrated in Fig.~\ref{fig:example3_mesh}. After 1000 time steps with a step size of $4 \cdot 10^{-4}$, the $\ell_2$ error compared to an analytical solution \cite{Tannor07} is~$4.3 \cdot 10^{-6}$.

\begin{figure}[!t]
  \centering
  \subfloat[][]{
	\includegraphics[width=2.2in]{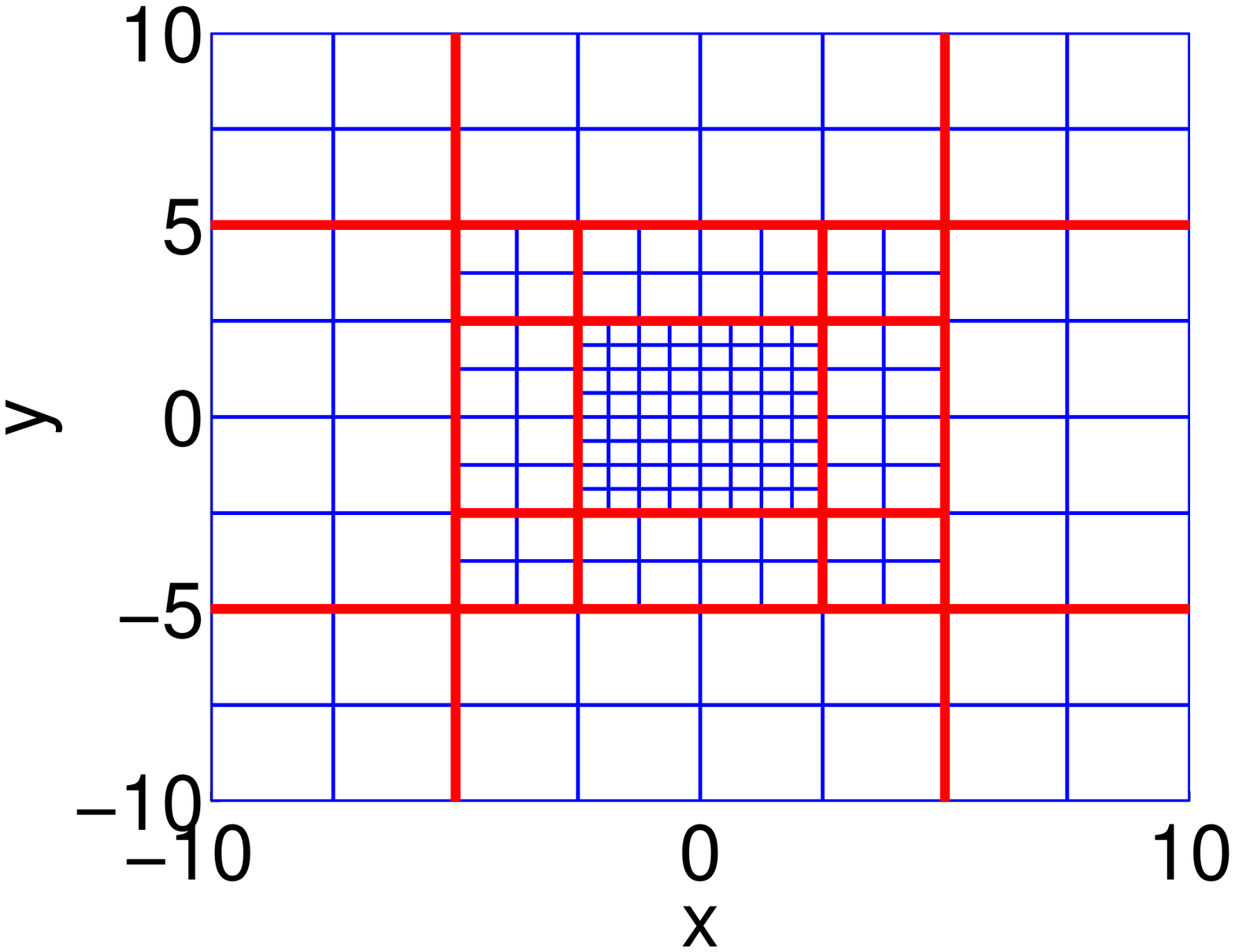}
	\label{fig:example3_mesh}
  } \hspace{0.1in}
  \subfloat[][]{
	\includegraphics[width=2.2in]{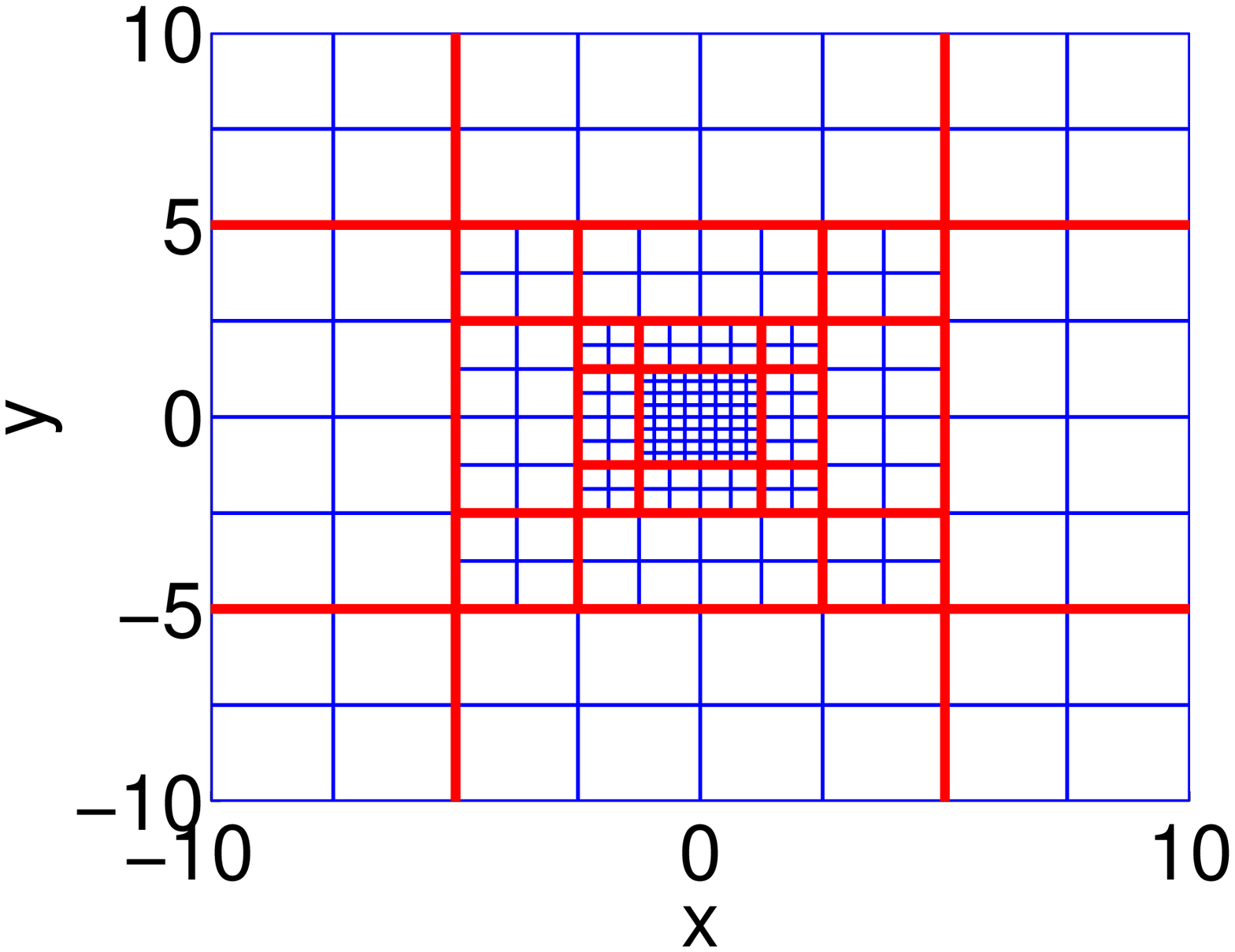}
	\label{fig:example3_mesh_b}
  }
	\caption{Meshes with corners and SBP-FD junctions used in the experiments in Section~\ref{sec:ho}. The mesh in (b) has an additional level of refinement in the four-by-four block region at the center of the mesh in (a).}
	\label{fig:example3}
\end{figure}

\begin{figure}[!t]
\centering
\subfloat[][]{
	\includegraphics[width=2.3in]{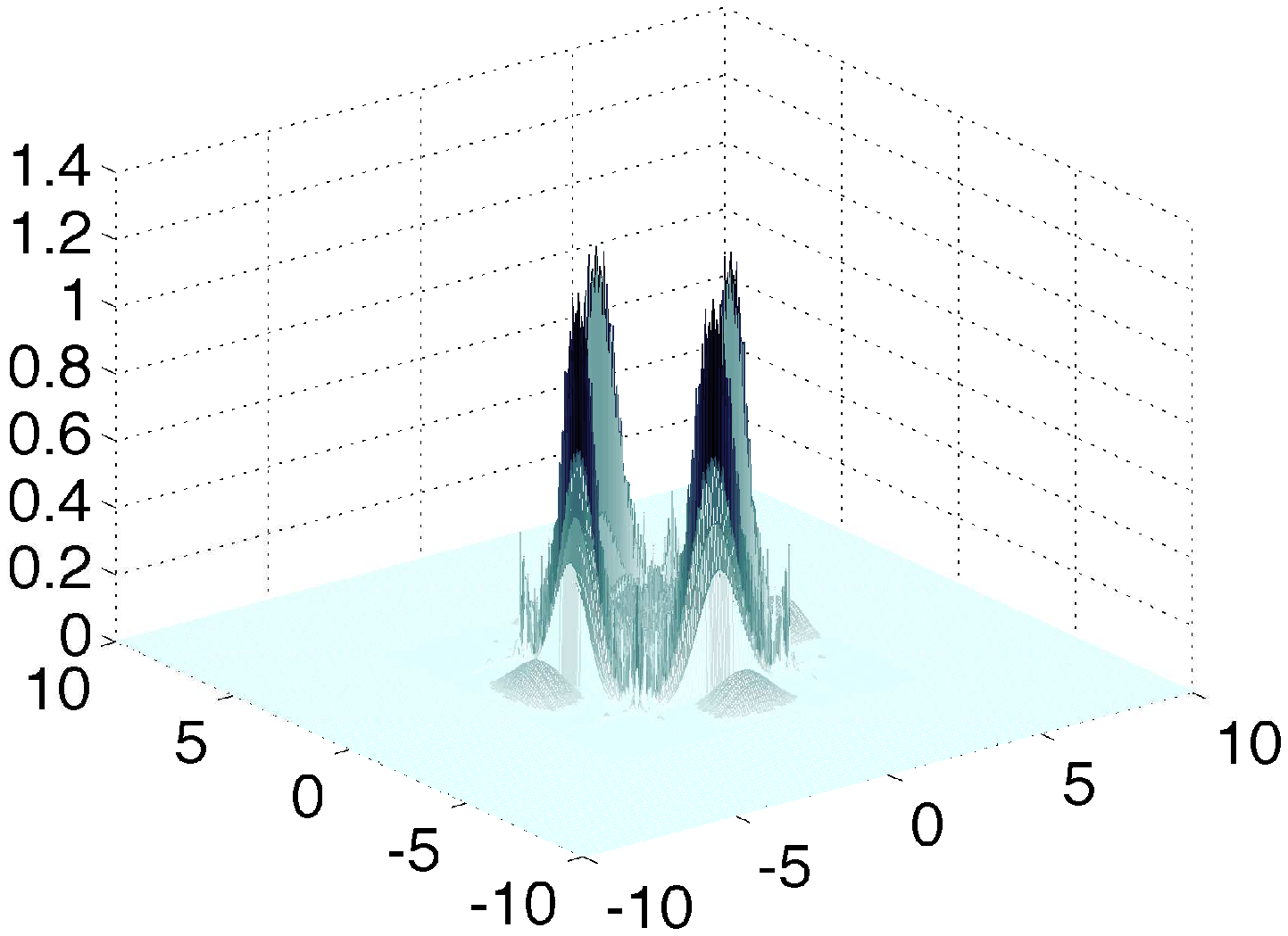}
	\label{fig:error_unsymmetric_nt250}
}
\subfloat[][]{
	\includegraphics[width=2.3in]{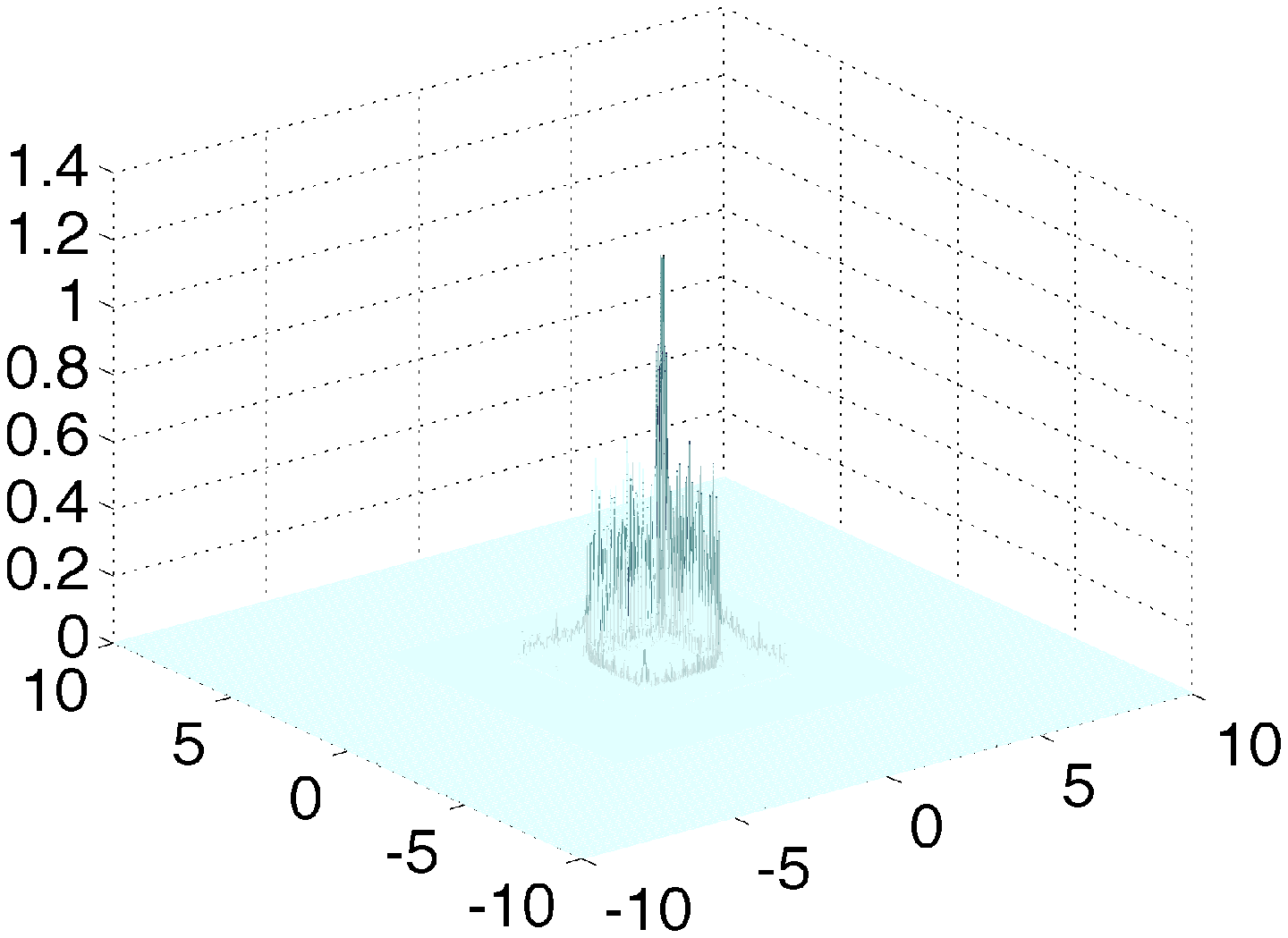}
	\label{fig:error_unsymmetric_nt1000}
}
	\caption{Absolute errors in the solution of a wave packet propagated on the meshes in Fig. \ref{fig:example3}. Observe that the errors are localized to the SBP-SAT interfaces. The error in (a) is on the scale of $10^{-7}$ and the error in (b) is on the scale of $10^{-4}$.}
\end{figure}

Moreover, we have simulated over a shorter time interval, $0.04$, on both the grid from Fig.~\ref{fig:example3_mesh} and the one shown in Fig.~\ref{fig:example3_mesh_b}. The only difference between the two meshes is that in mesh \ref{fig:example3_mesh_b}, the inner most four-by-four blocks are refined once more. The $\ell_2$ error on mesh \ref{fig:example3_mesh_b} is $1.5 \cdot 10^{-5}$ and on mesh \ref{fig:example3_mesh} it is $8.9 \cdot 10^{-8}$. Even though the former mesh is finer, the error is larger. This shows that one should avoid to place SBP-SAT interfaces too close to the center of the solution. Morover, the time steps need to be significantly smaller for the experiments on mesh \ref{fig:example3_mesh_b} than for the ones on mesh \ref{fig:example3_mesh}. The errors in the central region corresponding to meshes \ref{fig:example3_mesh} and \ref{fig:example3_mesh_b} are visualized in Figs. \ref{fig:error_unsymmetric_nt250} and \ref{fig:error_unsymmetric_nt1000}, respectively.

\subsubsection{Error estimation and mesh adaptation}
\label{sec:mesh_design}

Consider the example of a free wave packet. We choose an initial value with parameters $\alpha_x = \alpha_y  = 2$, $k_y = 1$, and $x_0 = y_0 = k_x = 0$ in Eq.~\eqref{Gaussian}. We fix the time step to $10^{-4}$, perform a simulation over 100 time steps, and set a global tolerance of $10^{-5}$. In this case, the local residual scaled by the time step needs to be bounded by $10^{-7}$.
 
\begin{figure}[!t]
\centering
\includegraphics[width=2.2in]{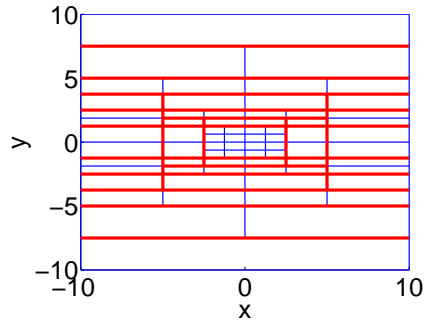}
\caption{Anisotropically refined mesh that is adapted to the wave packet with oscillations in $y$-dimension that is used in the experiment in Section~\ref{sec:mesh_design}.}
\label{fig:example1_mesh}
\end{figure}

First, we generate an initial mesh based on the error estimator described in Section \ref{sec:error_estimate}. Fig.~\ref{fig:example1_mesh} shows the mesh that is adapted to fit our parameters. One can see that the mesh fits well with the shape of the wave function: since the function has oscillations in the $y$-dimension only, the mesh is anisotropic with higher resolution along the $y$-dimension. Computing the Hamiltonian applied to the wave packet $U(\x,0)$ on the automatically generated grid and comparing with the analytical expression of the second derivative, we get an $\ell_2$ error of $2.7 \cdot 10^{-3}$. Hence, the actual error, when the Hamiltonian is applied, is a little larger than the tolerance required for the residual on the mesh. This does not come as a surprise, though, since we have scaled down the comparably large errors at the SBP-SAT interfaces when estimating the error in the mesh. The excerpt of the derivative shown in Fig.~\ref{fig:example1_error2ndder} illustrates the fact that the largest errors are indeed confined to the SBP-SAT interfaces.
 
Next, we propagate in time \textit{without adjusting the grid} for the residual of the propagated wave packet. After the 100 time steps, the error in the solution at time $10^{-2}$ is $1.2 \cdot 10^{-6}$. Hence, the global tolerance is met even though we do not check that the tolerance is met on the generated mesh in each time step. This is possible since the simulation time is rather short such that the solution does not move a lot. If we want to compute over a larger time interval, we would have to readjust the mesh after a number of steps. This experiment shows that the error estimation is effective and maybe a bit pessimistic. In this experiment, we have performed a rather large number of iterations in the Lanczos method so that the temporal error is insignificant. If we instead choose the size of the Krylov space adaptively with the same tolerance, the error becomes $1.3 \cdot 10^{-6}$ instead. Hence, the temporal error is a little smaller than the spatial one, but of similar order.
 
\begin{figure}[!t]
\centering
\includegraphics[width=2.5in]{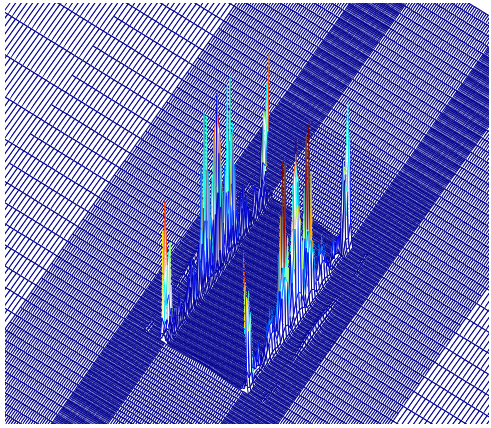}
\caption{Error in the Laplacian of the wave packet on the adaptive grid in Section~\ref{sec:mesh_design}. The figure shows an excerpt from the center of the grid. One can see that the error is centered around the SAT interfaces. The $\ell_\infty$ norm of the error is $6.9 \cdot 10^{-3}$.}
\label{fig:example1_error2ndder}
\end{figure}

\section{Conclusions and outlook}
 \label{section7}

We have presented a prototype implementation of an accurate and stable numerical method for derivative approximation on adaptive meshes. The block adaptivity is organized in a multiblock setting where different blocks can have different refinement levels. Our approach is based on finite differences combined with SBP-SAT interface treatment between neighboring blocks with different levels of refinement.  
The experiments show that SBP-SAT interfaces should be avoided around the maxima of the solution. We have therefore devised an interpolation procedure that treats junctions of different interfaces, allowing for more flexible grid configurations with a minimum number of SBP-SAT interfaces. 
In our setting, the approximation order is reduced at corner points compared to the interior accuracy. For an interior order of $2p$, the stencil order along edges is $p$, and the orders at corner points are $p-1$ and $p-2$ for approximations of first and second derivatives, respectively. However, the global convergence order in time-dependent simulations based on SBP operators is usually higher than the local convergence order at boundaries or interfaces. In simulations we observe an accuracy gain between one and three orders compared to the stencil approximation.
Moreover, we present a strategy to estimate the error on a grid and demonstrate that the error estimator is effective.

\section*{Acknowledgments}

The authors would like to thank Sverker Holmgren and Gunilla Kreiss for valuable insight and discussions. The design of the interpolation operators is based on a Maple sheet by Ken Mattsson. The simulations were performed on resources provided by SNIC-UPPMAX under projects p2003013 and p2005005. 

\begin{appendix}

\section{Interpolation operators at SBP-FD junctions}\label{appendix_operators}

The part of $\left(\begin{array}{c c} 
I_w^{u} \\
I_w^{v} \\
\end{array}\right)$ around the interface is given by for order 4
\renewcommand{\arraystretch}{1.8}
\begin{equation}\begin{aligned}
\notag
\left(\begin{array}{c c} 
\bar I_w^{u} \\
\bar I_w^{v} \\
\end{array}\right)
 =\left(\begin{array}{c c c c c c c c}
 1& 0& 0& 0& 0& 0& 0\\
  0& 1& 0& 0& 0& 0& 0\\
   -\frac{3}{59}& \frac{10}{59}& \frac{48}{59}& \frac{4}{59}& 0 &0& 0\\
        \frac{2}{17} &-\frac{5}{17}& \frac{2}{17}& \frac{20}{17}& -\frac{2}{17}& 0& 0\\
         0& 0& -\frac{2}{17} &\frac{20}{17} &\frac{2}{17}& -\frac{5}{17}& \frac{2}{17}\\
        0& 0 &0 &\frac{4}{59}& \frac{48}{59}& \frac{10}{59}& -\frac{3}{59}\\
         0& 0& 0& 0& 0& 1& 0\\
          0 &0& 0& 0& 0& 0& 1\\
          \end{array}\right),
\end{aligned}\end{equation}
and by for order 6, 

 
 \begin{equation}\begin{aligned}
 \notag
\left(\begin{array}{c c} 
\bar I_w^{u} \\
\bar I_w^{v} \\
\end{array}\right)& =\left(\begin{array}{c c c c c c c c c c c c c}
 1& 0& 0& 0& 0 & 0& 0 & 0 & 0 &0 &0\\
  0& 1& 0& 0& 0 & 0& 0 & 0 & 0 &0 &0\\
  0& 0& 1& 0& 0 & 0& 0 & 0 & 0 &0 &0\\
  \frac{-601}{2711}& \frac{2289}{2711}& -\frac{3117}{2711} & \frac{4320}{2711} & 0 &  -\frac{198}{2711}& \frac{18}{2711}& 0& 0& 0& 0\\
  \frac{1803}{12013} & -\frac{6104}{12013}& \frac{6234}{12013}& 0& \frac{8604}{12013} &  \frac{1584}{12013} & -\frac{108}{12013}& 0& 0& 0& 0 \\
  \frac{-3606}{13649}& \frac{11445}{13649}& -\frac{10390}{13649}& 0& \frac{1980}{13649} &  \frac{15660}{13649} & -\frac{1440}{13649}& 0& 0& 0& 0\\
  0& 0& 0& 0 &  -\frac{1440}{13649} &  \frac{15660}{13649}& \frac{1980}{13649} &-\frac{10390}{13649}& 0&   \frac{11445}{13649}&  -\frac{3606}{13649}  \\
   0& 0& 0& 0 & -\frac{108}{12013} &  \frac{1584}{12013}& \frac{8604}{12013}& \frac{6234}{12013}& 0& -\frac{6104}{12013}& \frac{1803}{12013}  \\
  0& 0& 0& 0 & \frac{18}{2711} & -\frac{198}{2711}& 0 & \frac{4320}{2711}&-\frac{3117}{2711}& \frac{2289}{2711} & -\frac{601}{2711}   \\
  0& 0& 0& 0& 0 & 0& 0 & 0 & 1 & 0 & 0 \\
   0& 0& 0& 0& 0 &  0& 0 & 0 & 0 &1 &0\\
  0& 0& 0& 0& 0 &  0& 0 & 0 & 0 & 0 & 1
          \end{array}\right). 
 \end{aligned}\end{equation}
 
$I_{uv}^w$ is then given by 
 \begin{equation}\begin{aligned}
 \notag
I_{uv}^w = \left( \begin{array}{cc} P_{x,w}^{-1}(I_w^u)^T P_{x,u} &  \hspace{5pt}  P_{x,w}^{-1}(I_w^v)^T P_{x,v} \end{array} \right).
\end{aligned}\end{equation}

\end{appendix}


\bibliographystyle{spmpsci}      


\end{document}